\newcommand{\B}{\mathcal{B}}
\newcommand{\K}{\mathcal{K}}
\newcommand{\G}{\mathcal{G}}
\renewcommand{\H}{\mathcal{H}}
\DeclareMathOperator{\Prim}{Prim}
\DeclareMathOperator{\Ind}{Ind}
\DeclareMathOperator{\-Ind}{-Ind}
\newcommand{\Diff}{\mathrm{Diff}}
\renewcommand{\S}{\mathbb{S}}
\newcommand{\dr}{\rightrightarrows}
\renewcommand{\hat}{\widehat}
\title{The Fredholm property for groupoids is a local property}
\author{Rémi Côme}
\begin{document}

\maketitle

\begin{abstract}
  Fredholm Lie groupoids were introduced by Carvalho, Nistor and Qiao as a tool for the study of partial differential equations on open manifolds. This article extends the definition to the setting of locally compact groupoids and proves that \enquote{the Fredholm property is local}. Let $\G \dr X$ be a topological groupoid and $(U_i)_{i\in I}$ be an open cover of $X$. We show that $\G$ is a Fredholm groupoid if, and only if, its reductions $\G^{U_i}_{U_i}$ are Fredholm groupoids for all $i \in I$. We exploit this criterion to show that many groupoids encountered in practical applications are Fredholm. As an important intermediate result, we use an induction argument to show that the primitive spectrum of $C^*(\G)$ can be written as the union of the primitive spectra of all $C^*(\G|_{U_i})$, for $i \in I$.
\end{abstract}

\tableofcontents

\section{Introduction}
\label{sec:introduction}

\subsection{Motivations: differential equations on singular manifolds}
\label{sub:Motivations}
This paper deals with the study of locally compact groupoids, and more specifically with the structure of the primitive spectrum of their associated $C^*$-algebras. Nevertheless, our underlying motivation is the study of linear differential equations on open manifolds or on manifolds with singularities. 

Thus, let $M_0$ be a smooth and complete Riemannian manifold without boundary, and $P$ an order-$m$ differential operator on $M_0$. Let $(H^s(M_0))_{s \in \R}$ be the usual scale of Sobolev spaces on $M_0$ \cite{HR2008}. Under some natural conditions, the operator $P$ extends as a bounded operator
\begin{equation} \label{eq:P_btw_Sobolev_space}
  P : H^s(M_0) \to H^{s-m}(M_0).
\end{equation}

When $M_0$ is a \emph{closed} manifold, it is well-known that the operator in \eqref{eq:P_btw_Sobolev_space} is Fredholm if, and only if, it is \emph{elliptic}, meaning that its principal symbol $\sigma(P) \in C^\infty(T^*M_0)$ vanishes only on the zero section \cite{Hor1985}. This result has deep consequences concerning spectral theory, differential equations and index theory on closed manifolds \cite{Hor1985,AS1968,Gil1984}.

A natural and important question is to seek extensions of this Fredholm characterization for \emph{open} manifolds. In \cite{CNQ2017}, Carvalho, Nistor and Qiao introduced a very large class of manifolds, called \emph{manifolds with amenable ends}. To any manifold $M_0$ belonging to this class, we can associate a family of manifolds $M_\alpha$, for some $\alpha \in A$, which are acted upon by Lie groups $G_\alpha$, and such that the following theorem holds.

\begin{thm}[ {\cite[Theorem 1.1]{CNQ2017}} ] \label{thm:Fredholm_condition_for_operators_vague}
  Let $P$ be a \enquote{compatible} operator on $M_0$. Then one can associate some $G_\alpha$-invariant differential operators $P_\alpha$ on $M_\alpha$ such that $P : H^s(M_0) \to H^{s-m}(M_0)$ is Fredholm if, and only if,
  \begin{enumerate}
    \item $P$ is elliptic, and
    \item each operator $P_\alpha : H^s(M_\alpha) \to H^{s-m}(M_\alpha)$ is invertible.
  \end{enumerate}
\end{thm}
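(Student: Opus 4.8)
The plan is to translate the analytic Fredholm question into an algebraic invertibility question inside a groupoid $C^*$-algebra, where the boundary operators $P_\alpha$ appear as the images of $P$ under a distinguished family of representations. First I would encode the geometry of $M_0$ and its amenable ends in a Lie groupoid $\G \dr M$ over a suitable compactification $M$ of $M_0$, arranged so that the restriction of $\G$ to the interior $M_0$ is the pair groupoid $M_0 \times M_0$, while the restrictions of $\G$ to the boundary faces recover the transformation groupoids $M_\alpha \rtimes G_\alpha$. The operator $P$ is then realized as an order-$m$ element $P \in \Psi^m(\G)$ of the pseudodifferential calculus attached to $\G$, and each limit operator $P_\alpha$ is obtained as the image of $P$ under restriction to the corresponding boundary face.

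Second, I would reduce to the case $m = 0$. Precomposing and postcomposing \eqref{eq:P_btw_Sobolev_space} with fixed invertible elliptic operators of the appropriate orders (built from powers $(1 + \Delta_\G)^{t/2}$ inside the calculus) turns $P$ into a bounded operator on $L^2(M_0)$ with the same Fredholm index, preserves ellipticity, and transports each $P_\alpha$ to an order-$0$ operator $\tilde P_\alpha$ whose invertibility is equivalent to that of $P_\alpha$. Thus it suffices to characterize when an order-$0$ operator on $L^2(M_0)$ is Fredholm.

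Third, I would invoke the structure of $C^*(\G)$. The regular representation on $L^2(M_0)$ sends the part of $C^*(\G)$ coming from the interior pair groupoid onto the compact operators $\K(L^2(M_0))$, so $P$ is Fredholm on $L^2(M_0)$ if, and only if, it is invertible in the quotient $C^*(\G)/\K$. The essential claim — and the heart of the matter — is that this quotient is faithfully detected by the principal symbol map together with the family of boundary representations $\pi_\alpha$, so that invertibility modulo $\K$ is equivalent to simultaneous invertibility of the principal symbol $\sigma(P)$ and of all the $\pi_\alpha(P) = P_\alpha$. Spelled out, $\sigma(P)$ invertible is exactly ellipticity (condition (1)), and each $\pi_\alpha(P)$ invertible is exactly invertibility of $P_\alpha$ (condition (2)), which assembles into the stated equivalence; note that when $M_0$ is closed the family $\{\pi_\alpha\}$ is empty and one recovers the classical elliptic-iff-Fredholm theorem.

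The main obstacle is the third step: proving that the symbol and the boundary representations $\{\pi_\alpha\}$ form an \emph{exhausting} family, i.e.\ that any element of $C^*(\G)$ annihilated by all of them already lies in the interior ideal mapping to $\K$. Equivalently, one must show that every primitive ideal of $C^*(\G)$ other than the regular one arises from some boundary face, which is precisely a statement about $\Prim C^*(\G)$ and about $\G$ being a Fredholm groupoid in the sense this paper develops; the amenability hypothesis on the ends is what guarantees that the $\pi_\alpha$ suffice and that no exotic (non-type-I or non-amenable) representations intrude.
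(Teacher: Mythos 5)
The paper does not prove this theorem: it is quoted verbatim from \cite{CNQ2017} as motivation, and its precise form (Theorem \ref{thm:fredholm_condition_for_operators}) is likewise cited without proof. Your sketch correctly reproduces the strategy of that reference and of the machinery this paper builds around it --- encoding the geometry in a Lie groupoid, reducing to order zero, identifying Fredholmness on $L^2(M_0)$ with invertibility modulo $\K$ via the vector representation, and isolating as the crux the exhaustiveness of the boundary representations, which is exactly what Definition \ref{defn:fredholm_groupoid}, Proposition \ref{prop:Fredholm_second_def} and the amenability argument of Theorem \ref{thm:amenable_implies_Fredholm} (via the Effros--Hahn conjecture for amenable groupoids) are designed to deliver --- so your approach is essentially the intended one.
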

This very vague statement is made more precise in Theorem \ref{thm:fredholm_condition_for_operators} below. The operators $P_\alpha$ should be thought of as \enquote{limit operators} giving some control on the behaviour of $P$ at infinity. Note that Theorem \ref{thm:Fredholm_condition_for_operators_vague} remains true if we replace $P$ by an operator in a suitable pseudodifferential calculus or if we consider operators acting between vector bundles \cite{NWX1999,CNQ2017}.

Theorem \ref{thm:Fredholm_condition_for_operators_vague} recovers in a unified setting many similar results that were previously known in particular cases \cite{DLR2015,Sch1991,Dau1988,Kon1967,KMR1997, GI2006, MPR2007,Man2002}. This was made possible by noting that, in many cases, the relevant differential operators are generated by the action of a Lie groupoid $\G$ on the manifold $M_0$, in a sense made more precise below. Obtaining Fredholm conditions can then be reduced to studying the representations of the reduced $C^*$-algebra of $\G$ \cite{CNQ2017,NP2017}.

This motivated the definition of \emph{Fredholm groupoids} in \cite{CNQ2017}. Roughly speaking, a Fredholm groupoid is a Lie groupoid $\G$ whose unit space is a compact manifold with boundary $M$, and such that 
\begin{equation*}
  \text{$a \in 1 + C^*_r(\G)$ is Fredholm} \Leftrightarrow \text{$\pi_x(a)$ is invertible for any $x \in \d M$}.
\end{equation*}
Here $\pi_x$ stands for the regular representation of $\G$ at $x$. This is very similar in spirit to Theorem \ref{thm:Fredholm_condition_for_operators_vague}: an element is Fredholm if, and only if, some limit operators are invertible. 

Let us summarize this method. To a manifold $M_0$, seen as the interior of a compact manifold with boundary $M$, one associates a Lie groupoid $\G \dr M$ whose action generates an interesting subalgebra of differential operators on $M_0$. To obtain the Fredholm characterization of Theorem \ref{thm:Fredholm_condition_for_operators_vague}, the main objective is to prove that the groupoid $\G$ is Fredholm. The aim of this papers is to construct a large class of Fredholm groupoids.

\subsection{Main results}
\label{sub:Main results}
To anticipate some further applications of the theory, we start by generalizing the definition of Fredholm groupoids to the case of locally compact groupoids. Indeed, for many practical applications, it seems that it would be interesting to extend the framework of Theorem \ref{thm:Fredholm_condition_for_operators_vague} and the tools surrounding it to the setting of continuously family groupoids \cite{Pat2000,LMN2000}.

If $\G \dr X$ is a topological groupoid and $U \subset X$ an open subset, we denote by $\G|_U = \G_U^U$ the \emph{reduction} of $\G$ to $U$, that is the subgroupoid of $\G$ made of all elements whose domain and range lie in $U$. The \emph{saturation} of $U$ is the set $\G\cdot U = r(d^{-1}(U))$. Our point in this paper is that the Fredholm property for groupoids is a local property, in the following sense.

\begin{thm} \label{thm:main}
  Let $\G\dr X$ be a locally compact, second-countable and locally Hausdorff groupoid, endowed with a right-invariant Haar system. Assume that
  \begin{enumerate}
    \item there is an open dense $\G$-invariant subset $V \subset X$ with $\G_V \simeq V\times V$, and
    \item we have a family $(U_i)_{i\in I}$ of open subsets of $X$ such that the saturations $(\G\cdot U_i)_{i\in I}$ provide an open cover of $X$.
  \end{enumerate}
  Then $\G$ is a Fredholm groupoid if, and only if, each reduction $\G|_{U_i}$ is also a Fredholm groupoid, for $i \in I$.
\end{thm}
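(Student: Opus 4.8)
The plan is to recast the Fredholm property as an exhaustiveness condition on regular representations and then to localize it over the cover via a decomposition of the primitive spectrum. Assumption (1) gives that $\G_V \simeq V\times V$ is the pair groupoid, so the invariant open set $V$ produces an ideal $\K := C^*_r(\G|_V) \cong \mathcal{K}(L^2)$ of ``compact'' elements, and an element $1+a$ with $a \in C^*_r(\G)$ is Fredholm precisely when its class is invertible in the unitization of $C^*_r(\G)/\K$. Unwinding the definition, $\G$ is Fredholm exactly when the family of regular representations $\{\pi_x : x \in X\setminus V\}$ is \emph{exhaustive} for $C^*_r(\G)/\K$, i.e.\ every primitive ideal of this quotient contains $\ker\pi_x$ for some boundary point $x$; the forward implication in the definition is automatic, since each $\pi_x$ with $x \notin V$ annihilates $\K$ and hence factors through the quotient. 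I would first record that each reduction inherits the same structure: $V\cap U_i$ is dense in $U_i$ and $\G|_{V\cap U_i}\simeq (V\cap U_i)\times(V\cap U_i)$, so ``$\G|_{U_i}$ is Fredholm'' is likewise the exhaustiveness of $\{\pi_y^{U_i} : y \in U_i\setminus V\}$ for $C^*_r(\G|_{U_i})/\K_i$, where $\K_i := C^*_r(\G|_{V\cap U_i})$.

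The heart of the argument is the decomposition $\Prim C^*_r(\G) = \bigcup_i \Prim C^*_r(\G|_{U_i})$. Two ingredients are needed. First, for any open $U$ the reduction $\G|_U$ is Morita equivalent to the reduction $\G|_{\G\cdot U}$ to its invariant saturation, because $U$ meets every orbit of $\G|_{\G\cdot U}$; Rieffel induction along this equivalence then yields a homeomorphism $\Prim C^*_r(\G|_U)\cong \Prim C^*_r(\G|_{\G\cdot U})$. Second, since $\G\cdot U_i$ is open and invariant, $C^*_r(\G|_{\G\cdot U_i})$ is an ideal of $C^*_r(\G)$, whose primitive spectrum is homeomorphic to the open subset $\{P : C^*_r(\G|_{\G\cdot U_i})\not\subseteq P\}$ of $\Prim C^*_r(\G)$. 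The hypothesis that the saturations cover $X$ forces $\overline{\sum_i C^*_r(\G|_{\G\cdot U_i})}=C^*_r(\G)$ --- proved by a partition of unity on $X$ subordinate to $(\G\cdot U_i)$ pulled back to $C_c(\G)$ --- so no primitive ideal contains every $C^*_r(\G|_{\G\cdot U_i})$, and these open sets cover $\Prim C^*_r(\G)$. Combining the two ingredients gives the claimed decomposition.

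Next I would match the regular representations across this decomposition. For $x\in \G\cdot U_i$ there is $y\in U_i$ in the same orbit, whence $\pi_x^\G\cong\pi_y^\G$; restricting $\pi_y^\G$ to the ideal $C^*_r(\G|_{\G\cdot U_i})$ gives an irreducible representation that corresponds, under the induction isomorphism above, to the regular representation $\pi_y^{U_i}$ of $\G|_{U_i}$. Moreover $x\in X\setminus V$ if and only if the corresponding $y$ lies in $U_i\setminus V$, so the boundary regular representations of $\G$ supported over $\G\cdot U_i$ are exactly the images of the boundary regular representations of $\G|_{U_i}$. The compacts are compatible as well: $\K$ restricts to the ideal $\K_i$ of $\G|_{U_i}$, so the correspondence descends to the quotients $C^*_r(\G)/\K$ and $C^*_r(\G|_{U_i})/\K_i$.

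Finally I would invoke the elementary fact that a family of representations is exhaustive for an algebra if and only if, for an open cover of its primitive spectrum by spectra of ideals, the members supported over each piece are exhaustive for that piece. Applying this to the cover of $\Prim\big(C^*_r(\G)/\K\big)$ by the sets coming from the ideals $C^*_r(\G|_{\G\cdot U_i})/\K_i$, exhaustiveness of $\{\pi_x : x\in X\setminus V\}$ globally is equivalent to exhaustiveness over each piece, which by the matching of the previous paragraph is exactly the Fredholm property of each $\G|_{U_i}$; this delivers the stated equivalence in both directions simultaneously. I expect the main obstacle to be the second paragraph: establishing the Morita equivalence $\G|_U\sim\G|_{\G\cdot U}$ and the resulting induction homeomorphism on primitive spectra rigorously in the locally compact, locally Hausdorff, reduced-$C^*$-algebra setting, together with the density argument for the covering ideals. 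The subsequent representation bookkeeping, while delicate, should then be routine.
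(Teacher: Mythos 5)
Your overall strategy --- decomposing $\Prim C^*_r(\G)$ over the cover by saturations via the Morita equivalence between $\G|_{U_i}$ and the restriction to $\G\cdot U_i$, a partition of unity and the ideal structure, then matching regular representations under induction and localizing the exhaustiveness condition --- is essentially the route the paper takes (Theorem \ref{thm:decomposition_of_the_spectrum}, Proposition \ref{prop:induction_main_properties}, Corollary \ref{cor:exhaustive_families} and the remark following Proposition \ref{prop:Fredholm_second_def}). There is, however, one genuine gap at the very first step, and it propagates through the whole argument.

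You assert that, unwinding the definition, ``$\G$ is Fredholm exactly when $\{\pi_x : x \in X\setminus V\}$ is exhaustive for $C^*_r(\G)/\K$.'' This is not the whole definition: Definition \ref{defn:fredholm_groupoid} (equivalently, Proposition \ref{prop:Fredholm_second_def}) also requires the \emph{injectivity of the vector representation} $\pi_0 : C^*_r(\G)\to\B(L^2(V))$, and that hypothesis is precisely what underwrites your identification of ``$1+\pi_0(a)$ is Fredholm in $\B(L^2(V))$'' with ``the class of $1+a$ is invertible in the unitization of $C^*_r(\G)/C^*_r(\G_V)$.'' Atkinson's theorem places you in the Calkin algebra of $L^2(V)$, hence in the unitization of $\pi_0(C^*_r(\G))/\K(L^2(V))$ by spectral permanence; identifying this with $C^*_r(\G)/C^*_r(\G_V)$ requires $\ker\pi_0=0$. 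Consequently, in the ``if'' direction you must separately prove that $\pi_0$ is injective when every $\G|_{U_i}$ is Fredholm, and in the ``only if'' direction that each $\pi_0^{U_i}$ is injective; your proposal never addresses either. The fix lives entirely inside the machinery you already set up: the paper uses $\Ind_{U_i}\pi_0^{U_i}=\pi_0$ (Proposition \ref{prop:induction_main_properties}) together with the fact that inducing faithful families from a covering family of reductions yields a faithful family (Corollary \ref{cor:faithful_induction}), and, for the converse, that $C^*_r(\G|_{U_i})$ embeds in $C^*_r(\G)$ compatibly with the vector representations (Lemma \ref{lm:Fredholm_goes_to_reductions}). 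Until this condition is restored, the equivalence you ``unwind'' is simply false for a groupoid whose vector representation has a kernel, so the subsequent localization, while correct in spirit, does not yet prove the theorem.
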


Most results giving sufficient conditions for a groupoid $\G$ to be Fredholm assume that $\G$ is Hausdorff, which sometimes is not so easy to check \cite{NP2017,CNQ2017}. This is not a requirement for Theorem \ref{thm:main}, which states that it is enough to look at the local structure of $\G$ (i.e.\ reductions). Since most groupoids studied in practical cases are locally very simple, this gives a powerful tool to prove the Fredholm property.

To study the representation theory of $\G$ in terms of its reductions, we will use the induction theory of $C^*$-algebras \cite{Rie1974,RW1998}. If $\G \dr X$ is a locally compact groupoid satisfying some extra assumptions, we associate to each open subset $U \subset X$ a continuous induction map between the primitive spectra
\begin{equation*}
  \Ind_U : \Prim(C^*(\G|_U)) \to \Prim(C^*(\G)),
\end{equation*}
which is an homeomorphism onto its image. As a important step, we obtain the following result
\begin{thm} \label{thm:second_main}
  Let $\G \dr X$ be a locally compact, second countable, and locally Hausdorff groupoid. Assume that we have a family of open subsets $(U_i)_{i\in I}$ such that their saturations $(\G\cdot U_i)_{i \in I}$ is an open cover of $X$. Then 
  \begin{equation*}
    \Prim C^*(\G) = \bigcup_{i \in I} \Ind_{U_i}\left( \Prim C^*(\G|_{U_i}) \right).
  \end{equation*}
\end{thm}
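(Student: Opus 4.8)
The plan is to translate the statement into the ideal structure of $C^*(\G)$ and then reduce it to a covering argument. Since each saturation $\G\cdot U_i$ is open and $\G$-invariant, the restriction $\G|_{\G\cdot U_i}$ is an open subgroupoid whose $C^*$-algebra $J_i := C^*(\G|_{\G\cdot U_i})$ sits as a closed two-sided ideal of $C^*(\G)$; I would first recall this, together with the resulting open embedding $\Prim J_i \hookrightarrow \Prim C^*(\G)$ that identifies $\Prim J_i$ with the open set $\{P \in \Prim C^*(\G) : J_i \not\subseteq P\}$. The induction map $\Ind_{U_i}$ factors, by its very construction, as the homeomorphism $\Prim C^*(\G|_{U_i}) \xrightarrow{\sim} \Prim J_i$ coming from the Rieffel (imprimitivity) equivalence between $C^*(\G|_{U_i})$ and $J_i$ --- valid because every orbit of $\G|_{\G\cdot U_i}$ meets $U_i$, by definition of the saturation --- followed by this open embedding. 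Hence the image of $\Ind_{U_i}$ is exactly the open subset $\{P : J_i \not\subseteq P\}$, and the theorem becomes equivalent to the statement that no primitive ideal of $C^*(\G)$ contains all of the $J_i$ simultaneously.

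Given this reformulation, I would argue as follows. The complement in $\Prim C^*(\G)$ of $\bigcup_i \Ind_{U_i}(\Prim C^*(\G|_{U_i}))$ is precisely the set of primitive ideals $P$ with $J_i \subseteq P$ for every $i$, equivalently with $\overline{\sum_i J_i} \subseteq P$. Since every proper closed two-sided ideal $I$ of a $C^*$-algebra is contained in some primitive ideal (the quotient is nonzero, hence admits an irreducible representation), this complement is empty as soon as $\overline{\sum_i J_i} = C^*(\G)$. So the whole theorem comes down to showing that the ideals $J_i$ generate $C^*(\G)$ as a closed ideal.

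To establish that density I would push the covering hypothesis down to $\G$ itself. Because each $\G\cdot U_i$ is invariant, one has $\G|_{\G\cdot U_i} = r^{-1}(\G\cdot U_i)$, and since $(\G\cdot U_i)_i$ covers $X = r(\G)$, the open sets $(\G|_{\G\cdot U_i})_i$ form an open cover of $\G$. It then suffices to show that the dense convolution subalgebra of compactly supported functions lies in the algebraic sum $\sum_i J_i$: given such an $f$, its support is compact, so it is covered by finitely many $\G|_{\G\cdot U_{i_1}}, \dots, \G|_{\G\cdot U_{i_n}}$, and a subordinate partition of unity decomposes $f = \sum_{k=1}^n f_k$ with each $f_k$ supported in $\G|_{\G\cdot U_{i_k}}$, hence $f_k \in J_{i_k}$. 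Passing to closures yields $\overline{\sum_i J_i} = C^*(\G)$, which finishes the argument.

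The two points I expect to demand the most care are the identification of the image of $\Ind_{U_i}$ with the ideal spectrum $\Prim J_i$ --- which must be read off from the construction of $\Ind_{U_i}$ and the compatibility of induction with the open inclusion of the ideal --- and the partition-of-unity decomposition in the last step, where the merely locally Hausdorff hypothesis forces one to work with the appropriate space of quasi-continuous compactly supported functions (sums of functions supported in Hausdorff open sets, extended by zero) rather than genuinely continuous ones. Once these are in place, the remainder is formal.
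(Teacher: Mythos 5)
Your proof is correct, and its global skeleton is the same as the paper's: identify the image of $\Ind_{U_i}$ with the open set $\Prim^{W_i}C^*(\G) = \{P : C^*(\G_{W_i}) \not\subseteq P\}$ (the paper does this via Lemma \ref{lm:prim_U_equals_prim_W} and Proposition \ref{prop:induction_main_properties}), and then reduce everything to showing that the ideals $C^*(\G_{W_i})$ generate $C^*(\G)$, since a proper closed ideal always sits inside a primitive one (this is the paper's Lemma \ref{lm:spectrum_decomposition_by_ideals}). Where you genuinely diverge is the density step. The paper takes a partition of unity $(\varphi_i)$ on the \emph{unit space} $X$ subordinate to $(W_i)$ and writes $a = \sum_i \varphi_i a$ for an arbitrary $a \in C^*(\G)$; this requires knowing that $f \mapsto \varphi f$ is bounded for the full $C^*$-norm (Lemma \ref{lm:multiplication_continuity}, i.e.\ Renault's Proposition 1.14, which rests on the disintegration theorem). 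You instead work with a partition of unity on the \emph{arrow space} subordinate to the cover $(d^{-1}(W_i))_i$, decompose each $f \in C_c(\G)$ into finitely many pieces $f_k \in C_c(\G_{W_{i_k}})$, and only then pass to closures. This is more elementary --- it needs nothing beyond the density of $C_c(\G)$ and the standard ideal embeddings $C^*(\G_{W_i}) \hookrightarrow C^*(\G)$ --- and, as you note, it transfers verbatim to the reduced algebras; the price is only the bookkeeping you already flag, namely that in the locally Hausdorff case one must first split $f$ into summands supported in Hausdorff open sets before applying the partition of unity. Both arguments yield $\overline{\sum_i C^*(\G_{W_i})} = C^*(\G)$, which suffices since primitive ideals are closed.
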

Theorem \ref{thm:second_main} gives us a good description of the representation theory of $\G$ in terms of that of its reductions. This is the main tool for the proof of Theorem \ref{thm:main}.

\subsection{Applications and examples}
\label{sub:Applications and examples}

A direct consequence of Theorem \ref{thm:main} is that gluing Fredholm groupoids together yields another Fredholm groupoid, which generalizes a result of \cite{Com2018}. Moreover, most groupoids appearing in practical applications have a very simple local structure: locally they are reductions of action groupoids. To formalize this, we introduce the class of \emph{local action groupoids}, and prove some related results. In particular, we show that if a local action groupoid $\G$ is locally given by the action of an \emph{amenable} group, then the groupoid $\G$ is Fredholm. 

Concrete example include the groupoids associated to manifolds with cylindrical ends, asymptotically euclidean manifolds and asymptotically hyperbolic manifolds. We also introduce a groupoid which models the analysis on manifolds with cuspidal points and prove that it is Fredholm.

\subsection{Outline of the paper}
\label{sub:Outline of the paper}

We begin in Section \ref{sec:preliminaries} by recalling some known results concerning the primitive spectrum of a $C^*$-algebra and the induction mechanism. We then turn our attention to locally compact groupoids and introduce some definitions and notations.

Section \ref{sec:representations_induced_from_reductions} introduce our main tool, which is the induction functor from the $C^*$-algebra of a reduction to an open subset. We define this functor and establish some important properties, and then use it to prove the decomposition of the primitive spectrum stated in Theorem \ref{thm:second_main}.

It is in Section \ref{sec:application_to_fredholm_groupoids} that we start dealing with Fredholm groupoids. We introduce our definition of Fredholm groupoids in the locally compact case and prove Theorem \ref{thm:main}. We then establish a few consequences. Among them, we define the notion of \emph{local isomorphisms} of two groupoids and the class of \emph{local action groupoids}.

Finally, Section \ref{sec:Applications to differential equations} gives some concrete examples of Fredholm groupoids. To motivate their construction, we step back into the setting of Lie groupoids and recall the link with the study of differential operators on manifolds. We then show how Theorem \ref{thm:main} may be used to prove that the groupoids under study are Fredholm (and local action groupoids).

\vspace{1em}

\emph{Aknowledgements:} the author would like to thank Victor Nistor for useful discussions and suggestions.

\section{Preliminaries}
\label{sec:preliminaries}

\subsection{The primitive spectrum of a $C^*$-algebra}
\label{sub:primitive_spectrum}

We recall in this section the definition of the primitive spectrum of a $C^*$-algebra, as well as the general induction mechanism for representations. The reader interested in more details may refer to the book of Dixmier \cite{Dix1977} for more details on the primitive spectrum and to \cite{Rie1974,RW1998} for the induction procedure.

\begin{defn}
  Let $A$ be a $C^*$-algebra. An ideal $J \subset A$ is called \emph{primitive} if it is the kernel of a non-zero irreducible representation of $A$. The \emph{primitive spectrum} of $A$, denoted $\Prim A$, is the set of all primitive ideals in $A$.
\end{defn}

For any ideal $I \subset A$, let 
\begin{equation*}
  \Prim_I A = \{ J \in \Prim A, I \subset J\},
\end{equation*}
and denote by $\Prim^I A$ the complement of $\Prim_I A$ in $\Prim A$. The sets $\Prim^I A$, where $I$ ranges through the ideals of $A$, are precisely the open sets in the \emph{Jacobson topology} of $\Prim A$. The \emph{support} of a representation $\pi$ of $A$ is the closed subset
\begin{equation*}
  \supp \pi \eqdef \Prim_{\ker \pi} A = \{J \in \Prim A, \ker \pi \subset J\}.
\end{equation*}

For any $C^*$-algebra $A$, let $\mathcal{R}(A)$ denote the category of unitary equivalence classes of non-degenerate representations of $A$. A well-known result states that, if $I$ is an ideal of $A$ and $\pi$ a non-degenerate representation of $I$ on a Hilbert space $H$, then there is a unique representation of $A$ on $H$ extending $\pi$ \cite{Dix1977}. This defines an induction functor
\begin{equation*}
  \Ind_I^A : \mathcal{R}(I) \to \mathcal{R}(A).
\end{equation*}
Moreover, the representation $\Ind_I^A \pi$ is irreducible if, and only if $\pi$ is irreducible. Thus $\Ind_I^A$ descends to a map
\begin{equation*}
  \Ind_I^A : \Prim I \to \Prim A,
\end{equation*}
which is a homeomorphism onto $\Prim^I A$ \cite{Dix1977}.

The ideal $I$ is a particular case of an \emph{$(A,I)$-correspondence} in the sense of \cite{Rie1974,Ren2014}. If $A,B$ are $C^*$-algebras, a $(A,B)$-correspondence $\mathcal{E}$ is a full right Hilbert $B$-module endowed with a morphism $\pi : A \to \mathcal{L}_B(\mathcal{E})$ such that $\pi(A)\mathcal{E}$ is dense in $\mathcal{E}$ (it corresponds to the notion of a $B$-rigged $A$-module in \cite{Rie1974}). To any such correspondence is associated an induction functor
\begin{equation*}
  \mathcal{E}\-Ind : \mathcal{R}(B) \to \mathcal{R}(A)
\end{equation*}
given by the tensor product with $\mathcal{E}$. If $\mathcal{E}$ is moreover an $(A,B)$-\emph{imprimitivity bimodule}, i.e.\ both a full left $A$-module and a full right $B$-module satisfying some compatibility conditions \cite{Rie1974,RW1998}, then $A$ and $B$ are said to be Morita equivalent and $\mathcal{E}\-Ind$ is an equivalence of categories. In that case, the functor $\mathcal{E}\-Ind$ descends to an homeomorphism
\begin{equation*}
  \mathcal{E}\-Ind : \Prim B \to \Prim A.
\end{equation*}

\subsection{Groupoids and their $C^*$-algebras}
\label{sub:groupoids}

We now turn our attention to topological groupoids and fix some definitions and notations. Recall that a groupoid consits of two sets: the set of \emph{arrows} $\G$ and the set of \emph{units} $X$, together with five structural morphisms: the domain and range $d, r : \G \to X$, the inverse $\iota: \G \to \G$, the inclusion of units $u : X \to \G$ and the product $\mu$ from the space 
\begin{equation*}
  \G^{(2)} \eqdef \{ (g,h) \in \G\times\G, d(g) = r(h) \}
\end{equation*}
of \emph{compatible arrows} to $\G$. Throughout the paper, we denote by $\G \dr X$ a groupoid with set of units $X$. If $A$ is a subset of $X$, we denote by $\G^A = r^{-1}(A)$ and $\G_A = d^{-1}(A)$. The groupoid $\G|_A \eqdef \G^A \cap \G_A$, with units $A$, is the \emph{reduction} of $\G$ to $A$. Finally, we denote by
\begin{equation*}
  \G\cdot A \eqdef \{ r(g) \mid g\in \G, d(g) \in A \} = r(d^{-1}(A))
\end{equation*}
the \emph{saturation} of $A$ in $X$ through the action of $\G$. The reader wishing to learn more about groupoids should refer to \cite{Ren1980,Mac1987}.

\begin{defn} \label{defn:topological_groupoid}
  A \emph{topological groupoid} is a groupoid $\G \dr X$ such that
  \begin{enumerate}
    \item the sets $\G$ and $X$ are topological spaces, with $X$ Hausdorff,
    \item all five structural maps $d,r,\iota,u$ and $\mu$ are continuous,
    \item the domain map $d$ is open.
  \end{enumerate}
\end{defn}
It follows from Definition \ref{defn:topological_groupoid} that $\iota$ is a homeomorphism and $r$ is open as well. We shall usually require the topological space $\G$ to be locally compact, second countable and locally Hausdorff (in the sense that each element of $\G$ should have a Hausdorff neighborhood).

If $\G$ is Hausdorff, let $C_c(\G)$ be the space of $\C$-valued continuous functions with compact support in $\G$. If $\G$ is only locally Hausdorff, then $C_c(\G)$ denotes instead the linear space generated by continuous functions that are compactly supported in a Hausdorff subset of $\G$ (see \cite{Con1982} for why this is a better choice). 

To define a product on $C_c(\G)$, we recall below the standard notion of a \emph{Haar system} from \cite{Ren1980}. In the following definition, we denote by $R_g : \G_{r(g)} \to \G_{d(g)}$ the homeomorphism induced by right-multiplication by an element $g \in \G$.

\begin{defn}
  Let $\G \dr X$ be a locally compact groupoid. A continuous, right-invariant \emph{Haar system} on $\G$ is a family of Borel measures $(\lambda_x)_{x \in X}$ such that
  \begin{enumerate}
    \item for all $x \in X$, the support of $\lambda_x$ is $\G_x = d^{-1}(x)$,
    \item (right-invariance) for any $g \in \G$, we have $(R_g)_*\lambda_{r(g)} = \lambda_{d(g)}$,
    \item (continuity) for any $f \in C_c(\G)$, the function
      \begin{equation*}
        x \mapsto \int_{\G_x} f(g) d\lambda_x(g)
      \end{equation*}
      is continuous.
  \end{enumerate}
\end{defn}

Assume that $\G$ is endowed with a continuous, right-invariant Haar system. If $f,g \in C_c(\G)$, we define their convolution product $f*g \in C_c(\G)$ by
\begin{equation*}
  f*g(x) = \int_{\G_{d(x)}} f(xy^{-1}) g(y) d\lambda_{d(x)}(y).
\end{equation*}

The \emph{full} $C^*$-algebra of $\G$, denoted $C^*(\G)$, is the completion of $C_c(\G)$ for the norm
\begin{equation*}
  \|f\|_{C^*(\G)} = \sup_\pi \|\pi(f)\|,
\end{equation*}
where $\pi$ ranges over all continuous, bounded representations of $C_c(\G)$ \cite{Ren1980}. The \emph{reduced} $C^*$-algebra of $\G$, denoted $C^*_r(\G)$, is the completion of $C_c(\G)$ for the norm
\begin{equation*}
  \|f\|_{C^*_r(\G)} = \sup_{x \in X} \|\pi_x(f)\|.
\end{equation*}
Here $\pi_x : C_c(\G) \to \B(L^2(\G_x))$ is the \emph{regular representation} of $\G$ at $x$, defined by $\pi_x(f)\xi = f*\xi$.

Finally, let $\G \dr X$ and $\H \dr Y$ be two locally compact, second-countable and locally Hausdorff groupoids. Recall that $\G$ and $\H$ are \emph{Morita equivalent} if there exists a locally compact space $Z$ with a left $\G$-action and a right $\H$-action, such that both actions are free and proper, commute with each other and the anchors $Z \to X$ and $Z \to Y$ induces bijections $\G\backslash Z \simeq Y$ and $Z/\H \simeq X$. Morita equivalent groupoids have Morita equivalent $C^*$-algebras \cite{MRW1987, Tu2004}. It follows from Subsection \ref{sub:primitive_spectrum} that there are homeomorphisms $\Prim C^*(\G) \simeq \Prim C^*(\H)$ and $\Prim C^*_r(\G) \simeq \Prim C^*_r(\H)$.

\section{Primitive spectrum and groupoid reductions}
\label{sec:representations_induced_from_reductions}

In this section, we show that the primitive spectrum of a groupoid $C^*$-algebra can be investigated locally: this is the content of Theorem \ref{thm:decomposition_of_the_spectrum}, which will be our main tool for Section \ref{sec:application_to_fredholm_groupoids}. Throughout the section, we shall consider a locally compact, second-countable and locally Hausdorff groupoid $\G \dr X$ that is endowed with a right-invariant continuous Haar system. 

\subsection{Representations induced from a reduction}
\label{sub:induced_representations}

We will show that each reduction of the groupoid to an open subset $U\subset X$ defines an induction functor between the categories of representations of $\G|_U = d^{-1}(U)\cap r^{-1}(U)$ and $\G$. The starting point for our construction is Remark \ref{rem:reduction_eq_saturation} below.

\begin{rem} \label{rem:reduction_eq_saturation}
  If $U \subset X$ is an open subset and $W \eqdef \G\cdot U$ its saturation, then the reduction $\G|_U$ is Morita equivalent to $\G_W$. The equivalence is implemented by the $(\G_W,\G|_U)$-space $\G_U = d^{-1}(U)$, acted upon by left and right multiplication. Both actions are free and proper, and the domain and range maps induce isomorphisms $\G_W \backslash \G_U \simeq U$ and $\G_U/\G|_U \simeq W$.
\end{rem}

According to the results recalled in Section \ref{sec:preliminaries}, it follows that $C^*(\G|_U)$ and $C^*(\G_W)$ are Morita equivalent \cite{MRW1987}. The corresponding $(C^*(\G_W),C^*(\G|_U))$-imprimitivity bimodule $\mathcal{E}_U$ is the completion of $C_c(\G_U)$ for the norm
\begin{equation*}
  \|f\|_{\mathcal{E}_U} = \|f^* * f\|_{C^*(\G|_U)}^\frac{1}{2},
\end{equation*}
with $C^*(\G_W)$ and $C^*(\G|_U)$ acting by right and left multiplication respectively. Similarly, there is a Morita equivalence between the reduced algebras $C^*_r(\G_U)$ and $C^*_r(\G_W)$. We choose to stick to the study of the full algebras for now, but all results of this section apply to their reduced counterparts, as pointed by Remark \ref{rem:extends_to_reduced_algebras}.

It is well-known that for any $\G$-invariant open subset $W \subset X$, the $C^*$-algebra $C^*(\G_W)$ embeds as an ideal in $C^*(\G)$ \cite{MRW1996}. We saw in Subsection \ref{sub:primitive_spectrum} that this implies the existence of an induction functor
\begin{equation*}
  \Ind_W : \mathcal{R}(C^*(\G_W)) \to \mathcal{R}(C^*(\G)),
\end{equation*}
between the respective categories of non-degenerate representations.

\begin{defn} \label{defn:induction}
  Let $\G \dr X$ be a locally compact, second countable and locally Hausdorff groupoid. Let $U$ be an open subset of $X$, and $W \eqdef \G\cdot U$ be its saturation. The \emph{induced representation} functor
\begin{equation*}
  \Ind_U : \mathcal{R}(C^*(\G|_U)) \to \mathcal{R}(C^*(\G))
\end{equation*}
is defined as the composition $\Ind_U = \Ind_W\circ \mathcal{E}_U\-Ind$.
\end{defn}

\begin{rem}
  A possibly more direct way to define the functor $\Ind_U$ is to observe that $\mathcal{E}_U$ is a $(C^*(\G),C^*(\G|_U))$-correspondence, as introduced in Subsection \ref{sub:primitive_spectrum}. Correspondingly, the space $\G_U$ is a $(\G,\G|_U)$-correspondence (or Hilsum-Skandalis morphism) in the sense of \cite{Ren2014, HS1987}.
  We do not emphasize this approach too much however, because the factorization of $\Ind_U$ through $\mathcal{R}(C^*(\G_{W}))$ given by Definition \ref{defn:induction} will be of use below.
\end{rem}

\begin{lm} \label{lm:induction_norm_estimate}
  Let $\G \dr X$ be a locally compact, second-countable and locally compact groupoid, and let $U \subset X$ be open. If $\pi$ is a non-degenerate representation of $C^*(\G|_U)$, then for any $f \in C_c(\G|_U)$ we have
  \begin{equation*}
    \|\pi(f)\| \le \| \Ind_U \pi(f)\|.
  \end{equation*}
\end{lm}

\begin{proof}
  Recall that $\Ind_U \pi$ is a representation of $C^*(\G)$ on $\mathcal{H} \eqdef \mathcal{E}_U \otimes_\pi H$. Let $\mathcal{H}_U \subset \mathcal{H}$ be the closed subspace generated by linear combinations of tensors $f\otimes \xi$, with $\xi \in H$ and $f \in C_c(\G|_U)$. Let also $\rho$ be the restriction of $\Ind_U \pi$ to $C_c(\G|_U)$. If $f,g \in C_c(\G|_U)$ and $\xi, \eta \in H$, then $\rho(f)(g\otimes\xi) = (f*g)\otimes \xi$ is in $\mathcal{H}_U$, so $\mathcal{H}_U$ is stable by $\rho$. Moreover
  \begin{equation*}
    \langle f\otimes\xi, g\otimes\eta \rangle_{\mathcal{H}_U} \eqdef \langle \pi(g^* * f)\xi, \eta \rangle_H = \langle \pi(f)\xi, \pi(g)\eta \rangle_H,
  \end{equation*}
  so the map $\Phi : f\otimes \xi \mapsto \pi(f)\xi$ extends as an isometry from $\mathcal{H}_U$ to $H$. Since $\pi$ is non-degenerate, the map $\Phi$ is an isomorphism. Now $\Phi(\rho(f)g\otimes\xi) = \pi(f)\Phi(f\otimes\xi)$, so $\rho$ and $\pi$ are unitarily equivalent. This shows that 
  \begin{equation*}
    \|\pi(f)\|  = \|\rho(f) \| \le \|\Ind_U \pi(f)\|
  \end{equation*}
  for any $f \in C_c(\G|_U)$.
\end{proof}

A corollary is the following not-so-obvious result, for which we could not find a reference in the litterature.
\begin{cor}
  Let $\G \dr X$ be a locally compact, second countable and locally Hausdorff groupoid. Let $U$ be an open subset of $X$. Then $C^*(\G|_U)$ is a subalgebra of $C^*(\G)$.
\end{cor}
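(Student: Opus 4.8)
The plan is to deduce the corollary directly from Lemma \ref{lm:induction_norm_estimate}. The statement to prove is that $C^*(\G|_U)$ embeds as a subalgebra of $C^*(\G)$, and the natural candidate for this embedding is the extension to $C^*(\G|_U)$ of the inclusion $C_c(\G|_U) \hookrightarrow C_c(\G)$, which at the level of $C_c$ is compatible with convolution (a function supported in $\G|_U$ convolves with another such function exactly as it would inside $\G$, since the fibers $\G_x$ for $x \in U$ only see arrows landing in $U$ from both sides when the inputs are supported in $\G|_U$). The real content is that this $*$-homomorphism is isometric, hence injective with closed image, so that the completion $C^*(\G|_U)$ maps isomorphically onto a subalgebra of $C^*(\G)$.

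First I would fix an $f \in C_c(\G|_U)$ and compare the two norms $\|f\|_{C^*(\G|_U)}$ and $\|f\|_{C^*(\G)}$. The inequality $\|f\|_{C^*(\G)} \le \|f\|_{C^*(\G|_U)}$ should follow because any representation of $C^*(\G)$ restricts to a representation of $C_c(\G|_U)$, so the supremum defining $\|f\|_{C^*(\G)}$ is taken over a subfamily of the representations controlling $\|f\|_{C^*(\G|_U)}$; one must check that this restriction is indeed a bounded $*$-representation of $C_c(\G|_U)$, which is where the multiplicativity of the inclusion on $C_c$ is used. For the reverse inequality, I would take a representation $\pi$ of $C^*(\G|_U)$ that nearly attains $\|f\|_{C^*(\G|_U)}$ and apply Lemma \ref{lm:induction_norm_estimate}: the lemma gives $\|\pi(f)\| \le \|\Ind_U\pi(f)\|$, and since $\Ind_U\pi$ is a representation of $C^*(\G)$ we have $\|\Ind_U\pi(f)\| \le \|f\|_{C^*(\G)}$. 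Taking the supremum over all such $\pi$ yields $\|f\|_{C^*(\G|_U)} \le \|f\|_{C^*(\G)}$.

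Combining the two inequalities gives $\|f\|_{C^*(\G)} = \|f\|_{C^*(\G|_U)}$ for every $f \in C_c(\G|_U)$. Consequently the inclusion $C_c(\G|_U) \hookrightarrow C_c(\G)$ is isometric for the respective $C^*$-norms, and therefore extends to an isometric $*$-homomorphism $C^*(\G|_U) \to C^*(\G)$. An isometric homomorphism of $C^*$-algebras is automatically injective with closed range, so $C^*(\G|_U)$ is identified with a closed subalgebra of $C^*(\G)$, as claimed.

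I expect the main obstacle to be the ``easy'' inequality $\|f\|_{C^*(\G)} \le \|f\|_{C^*(\G|_U)}$ rather than the one supplied by the lemma. The subtlety is that restricting a representation of $C^*(\G)$ to $C_c(\G|_U)$ must genuinely produce a $*$-representation whose norm is controlled by $\|\cdot\|_{C^*(\G|_U)}$, and in the locally Hausdorff setting the definition of $C_c$ as a span of functions supported in Hausdorff pieces requires one to verify that the convolution computed in $\G$ agrees with that computed in $\G|_U$ on these generators. This compatibility is precisely what makes the inclusion a homomorphism, and it is the point where the openness of $U$ and the structure of the fibers $\G_x$, $x \in U$, enter; once it is in place, both inequalities and hence the equality of norms follow cleanly.
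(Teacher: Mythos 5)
Your proposal is correct and follows essentially the same route as the paper: the inequality $\|f\|_{C^*(\G)} \le \|f\|_{C^*(\G|_U)}$ comes from restricting representations of $C^*(\G)$ to $C_c(\G|_U)$, and the reverse inequality comes from Lemma \ref{lm:induction_norm_estimate} together with the fact that $\Ind_U\pi$ is a representation of $C^*(\G)$. The only difference is cosmetic: you make explicit that the two norms coincide and hence the embedding is isometric, whereas the paper states only continuity and injectivity, but the argument is identical.
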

\begin{proof}
  Let $f \in C_c(\G|_U)$. Every representation of $C^*(\G)$ restricts as a bounded representation of $C_c(\G|_U)$, so $\|f\|_{C^*(\G)} \le \|f\|_{C^*(\G|_U)}$. Conversely, if $\pi$ is a non-degenerate representation of $C^*(\G|_U)$, then $\Ind_U\pi$ is a representation of $C^*(\G)$ such that
  \begin{equation*}
    \|\pi(f)\| \le \|\Ind_U(f)\|
  \end{equation*}
  by Lemma \ref{lm:induction_norm_estimate}. Therefore $\|f\|_{C^*(\G|_U)} \le \|f\|_{C^*(\G)}$. This shows that the inclusion of $C_c(\G|_U)$ into $C_c(\G)$ extends to a continuous and injective $*$-morphism from $C^*(\G|_U)$ into $C^*(\G)$.
\end{proof}

For any open subset $U \subset X$, set
\begin{equation*}
  \Prim_U C^*(\G) \eqdef \{ J \in \Prim C^*(\G), C^*(\G_U) \subset J \},
\end{equation*}
and let $\Prim^U C^*(\G)$ be its complementary subset in $\Prim C^*(\G)$.

\begin{lm} \label{lm:prim_U_equals_prim_W}
  Let $U \subset X$ be open, and $W = \G\cdot U$ be its saturation. Then
  \begin{equation*}
    \Prim_U C^*(\G) = \Prim_W C^*(\G) \quad \text{and} \quad \Prim^U C^*(\G) = \Prim^W C^*(\G).
  \end{equation*}
\end{lm}
\begin{proof}
  The algebra $C^*(\G_W)$ is the closed, two-sided ideal generated by $C^*(\G|_U)$ in $C^*(\G)$. Thus, if $J$ is a primitive ideal of $C^*(\G)$ that contains $C^*(\G|_U)$, then $J$ also contains all of $C^*(\G_W)$. On the other hand, it is obvious that if $J$ contains $C^*(\G_W)$, then it also contains the subalgebra $C^*(\G|_U)$. This proves that $\Prim_U C^*(\G) =\Prim_W C^*(\G)$, and therefore that $\Prim^U C^*(\G) = \Prim^W C^*(\G)$.
\end{proof}

We can now record the main properties of $\Ind_U$.

\begin{prop} \label{prop:induction_main_properties}
  Let $\G \dr X$ be a locally compact, second countable and locally Hausdorff groupoid. Let $U$ be an open subset of $X$.
  \begin{enumerate}
    \item \label{it:prim} The functor $\Ind_U$ descends to a continuous map
      \begin{equation*}
        \Ind_U : \Prim C^*(\G|_U) \to \Prim C^*(\G),
      \end{equation*}
      which is an homeomorphism onto $\Prim^U C^*(\G)$.

    \item \label{it:weak_containment} Let $\pi, \rho$ be two non-degenerate representations of $C^*(\G|_U)$ such that $\pi$ is weakly contained in $\rho$. Then $\Ind_U \pi$ is weakly contained in $\Ind_U \rho$. 

    \item \label{it:support} If $\pi$ is a non-degenerate representation of $C^*(\G|_U)$, then 
      \begin{equation*}
        \Ind_U(\supp \pi) \subset \supp(\Ind_U \pi).
      \end{equation*}
    \item \label{it:regular_rep} For $x \in U$, let $\pi_x^U$ be the corresponding regular representation of $C^*(\G|_U)$ and $\pi_x$ the one of $C^*(\G)$. Then $\Ind_U \pi_x^U = \pi_x$.
  \end{enumerate}
\end{prop}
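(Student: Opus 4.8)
The strategy is to exploit the factorization $\Ind_U = \Ind_W \circ \mathcal{E}_U\-Ind$ from Definition~\ref{defn:induction} and to apply the general results of Subsection~\ref{sub:primitive_spectrum} to each of the two factors.

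For part~\ref{it:prim}, I observe that both factors pass to primitive spectra. Since $\mathcal{E}_U$ is an imprimitivity bimodule, $\mathcal{E}_U\-Ind$ is an equivalence of categories and descends to a homeomorphism $\Prim C^*(\G|_U) \to \Prim C^*(\G_W)$. Since $C^*(\G_W)$ is an ideal of $C^*(\G)$, the functor $\Ind_W$ preserves irreducibility and descends to a homeomorphism $\Prim C^*(\G_W) \to \Prim^W C^*(\G)$. Composing yields a homeomorphism $\Prim C^*(\G|_U) \to \Prim^W C^*(\G)$, and $\Prim^W C^*(\G) = \Prim^U C^*(\G)$ by Lemma~\ref{lm:prim_U_equals_prim_W}; continuity is then automatic.

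For part~\ref{it:weak_containment}, it is enough to check that each factor preserves weak containment, since a composition of such functors does. For $\mathcal{E}_U\-Ind$ this is a standard property of Rieffel induction along an imprimitivity bimodule (indeed along any correspondence). For $\Ind_W$, recall that the unique extension $\tilde\sigma$ to $C^*(\G)$ of a non-degenerate representation $\sigma$ of the ideal $C^*(\G_W)$ has kernel $\ker\tilde\sigma = \{ a \in C^*(\G) : a\, C^*(\G_W) \subset \ker\sigma \}$; hence if $\rho'$ and $\pi'$ are representations of $C^*(\G_W)$ with $\ker\rho' \subset \ker\pi'$, then $\ker\tilde\rho' \subset \ker\tilde\pi'$, which is the asserted weak containment. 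Part~\ref{it:support} then follows formally: given $J \in \supp\pi$ and an irreducible representation $\sigma_J$ of $C^*(\G|_U)$ with $\ker\sigma_J = J$, the inclusion $\ker\pi \subset J$ says that $\sigma_J$ is weakly contained in $\pi$, so by part~\ref{it:weak_containment} we have $\ker(\Ind_U\pi) \subset \ker(\Ind_U\sigma_J)$; by part~\ref{it:prim} the representation $\Ind_U\sigma_J$ is irreducible with kernel $\Ind_U(J)$, whence $\ker(\Ind_U\pi) \subset \Ind_U(J)$, i.e.\ $\Ind_U(J) \in \supp(\Ind_U\pi)$.

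Finally, for part~\ref{it:regular_rep} I use the factorization once more and identify each stage separately. Fix $x \in U \subset W$. Since $W$ is $\G$-invariant and $x \in W$, one has $\G_x \subset \G_W$, so the regular representation $\pi_x^W$ of the ideal $C^*(\G_W)$ and the regular representation $\pi_x$ of $C^*(\G)$ both act by convolution on $L^2(\G_x)$, and $\pi_x$ restricts to $\pi_x^W$ on $C^*(\G_W)$; as $\pi_x^W$ is non-degenerate, uniqueness of the extension from an ideal gives $\Ind_W\pi_x^W = \pi_x$. It remains to show $\mathcal{E}_U\-Ind\,\pi_x^U \cong \pi_x^W$. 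I would do this by exhibiting a unitary $\mathcal{E}_U \otimes_{\pi_x^U} L^2((\G|_U)_x) \to L^2(\G_x)$, defined on elementary tensors by $f \otimes \xi \mapsto f * \xi$, where $(f*\xi)(g) = \int_{(\G|_U)_x} f(gk^{-1})\,\xi(k)\, d\lambda_x(k)$ for $g \in \G_x$ (the integrand is supported where $gk^{-1} \in \G_U$, so the formula is well defined). Associativity of convolution shows that this map intertwines the left $C^*(\G_W)$-action with $\pi_x^W$, and one checks that it is isometric for the inner product induced by $\pi_x^U$ and surjective by non-degeneracy. This last identification, matching an induced regular representation with a regular representation through the groupoid equivalence $\G_U$, is the main technical point, and the only place where the explicit form of the imprimitivity bimodule $\mathcal{E}_U$ is needed.
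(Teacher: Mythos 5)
Your proposal is correct and follows essentially the same route as the paper: part \eqref{it:prim} via the factorization $\Ind_U = \Ind_W\circ\mathcal{E}_U\-Ind$ and Lemma \ref{lm:prim_U_equals_prim_W}, part \eqref{it:support} as a formal consequence of weak-containment preservation, and part \eqref{it:regular_rep} via the unitary $f\otimes\xi\mapsto f*\xi$ checked to be isometric and surjective. The only cosmetic differences are that you spell out why extension from the ideal $C^*(\G_W)$ preserves weak containment (the paper just cites \cite{RW1998}) and that you verify part \eqref{it:regular_rep} in two stages through $\pi_x^W$ rather than intertwining the full $C^*(\G)$-action in one step; both variants are sound.
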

\begin{proof}
  According to Definition \ref{defn:induction}, the functor $\Ind_U$ is defined as the composition $\Ind_W\circ \mathcal{E}_U\-Ind$. We highlighted in Section \ref{sec:preliminaries} that both $\Ind_W$ and $\mathcal{E}_U\-Ind$ induce continuous maps between primitive spectra, that are homeomorphims onto their respective images. Therefore, the map
  \begin{equation} \label{eq:defn_induction}
    \begin{aligned}
      \Ind_U : \Prim C^*(\G|_U) & \to \Prim C^*(\G) \\
      \ker \pi & \mapsto \ker(\Ind_U \pi)
    \end{aligned}
  \end{equation}
  is well-defined, continuous, and an homeomorphism onto $\Prim^W C^*(\G)$. The latter coincides with $\Prim^U C^*(\G)$ by Lemma \ref{lm:prim_U_equals_prim_W}, which proves Assertion \eqref{it:prim}.

  Assertion \eqref{it:weak_containment} is a well-known property of the Rieffel induction procedure, whose proof can be found in \cite{RW1998}. Assertion \eqref{it:support} is a direct consequence. Indeed, let $J = \ker \rho$ be a primitive ideal contained in $\supp \pi$. By definition, this is equivalent to $\rho \prec \pi$. Then $\Ind_U \rho \prec \Ind_U \pi$, which means that $\ker(\Ind_U \pi) \subset \ker(\Ind_U \rho)$. Since $\Ind_U J = \ker(\Ind_U \rho)$ by Equation \eqref{eq:defn_induction}, we conclude that $\Ind_U J \in \supp(\Ind_U(\pi))$. This proves the inclusion $\Ind_U(\supp \pi) \subset \supp(\Ind_U \pi)$.

  To prove Assertion \ref{it:regular_rep}, let $\mathcal{H} = \mathcal{E}_U \otimes_{\pi_x^U} L^2(\G_x^U)$. We need to show that the map $\Phi : \mathcal{H} \to L^2(\G_x)$ defined by
  \begin{equation*}
    \Phi: f \otimes \xi \mapsto f * \xi
  \end{equation*}
  extends to a Hilbert space isomorphism. Let $f,g \in C_c(\G_U)$ and $\xi,\eta \in C_c(\G_x^U)$. By definition
  \begin{align*}
    \langle f\otimes \xi, g\otimes \eta \rangle_{\mathcal{H}} & = \langle (g^**f)*\xi,\eta \rangle_{L^2(\G^U_x)} = \langle (g^**f)*\xi,\eta \rangle_{L^2(\G_x)} \\ 
    & = \langle f*\xi,g*\eta \rangle_{L^2(\G_x)},
  \end{align*}
  so $\Phi$ is an isometry. To show that $\Phi$ is onto, let $f \in C_c(\G_U)$, and let $(\xi_n)_{n \in \N}$ be an approximate unit in $C_c(\G|_U)$. Then $(f*\xi_n)|_{\G_x}$ converges to $f|_{\G_x}$ in $L^2(\G_x)$. This proves that the image of $\Phi$ contains the dense subset $C_c(\G_x)$, hence $\Phi$ is onto. The map $\Phi$ is therefore an isomorphism. Now, let $\rho = \Ind_U \pi_x^U$. If $g \in C_c(\G)$, then 
  \begin{align*}
    \Phi(\rho(g)(f\otimes\xi)) & \eqdef \Phi((g*f)\otimes\xi) = g*(f*\xi) \\
     &= \pi_x(g)(f*\xi) = \pi_x(g) \Phi(f\otimes\xi).
  \end{align*}
  Since $C_c(\G)$ is dense in $C^*(\G)$ and $\rho$ and $\pi_x$ are continuous, this proves that $\Ind_U \pi_x^U$ and $\pi_x$ define the same class in $\mathcal{R}(C^*(\G))$.
\end{proof}

\begin{rem} \label{rem:extends_to_reduced_algebras}
  All the constructions of this section can be made in the same way by replacing every full groupoid algebras by their \emph{reduced} counterparts. More explicitely, if $\G \dr X$ is a groupoid satisfying the assumptions of Definition \ref{defn:induction} and $U \subset X$ an open subset, then there is an induction functor
  \begin{equation*}
    \Ind_U : \mathcal{R}(C^*_r(\G|_U)) \to \mathcal{R}(C^*(\G)).
  \end{equation*}
  All the properties from Proposition \ref{prop:induction_main_properties} follow if we replace each full agebra by its reduced counterpart.
\end{rem}

\begin{rem}
  It should be noted that $C^*(\G|_U)$ is a \emph{hereditary subalgebra} \cite{Bla2006} of $C^*(\G)$ (and so is $C^*_r(\G|_U)$ in $C^*_r(\G)$). A hereditary subalgebra $B \subset A$ is always Morita equivalent to the closed, two-sided ideal $I_B$ it generates: in our case this ideal is $I_{C^*(\G|_U)} = C^*(\G_W)$, with $W = \G\cdot U$. This yields an induction functor
  \begin{equation*}
    \Ind_B^A : \mathcal{R}(B) \to \mathcal{R}(A),
  \end{equation*}
  which factorizes through $\mathcal{R}(I_B)$, just as in Definition \ref{defn:induction}. This recasts our construction in a more general setting, for which most results we have shown in this subsection should hold.
\end{rem}

\subsection{Decomposition of the spectrum}
\label{sub:decomposition_of_the_spectrum}

As in the previous sections, let $\G \dr X$ be a locally compact, second countable and locally Hausdorff groupoid, endowed with a right-invariant continuous Haar system. If $f \in C_c(\G)$ and $\varphi \in C_0(X)$, we follow the notation of \cite{Ren1980} and denote by $\varphi f$ the function $(\varphi\circ r)\cdot f$ (the central dots denotes scalar multiplication, and not convolution).

\begin{lm} \label{lm:spectrum_decomposition_by_ideals}
  Let $A$ be a $C^*$-algebra and $(I_\lambda)_{\lambda \in \Lambda}$ a family of ideals in $A$ such that $\sum_{\lambda \in \Lambda} I_\lambda = A$. Then 
  \begin{equation*}
    \Prim A = \bigcup_{\lambda \in \Lambda} \Prim I_\lambda,
  \end{equation*}
  where we identify $\Prim I_\lambda$ with its image $\Prim^{I_\lambda} A$ through $\Ind_{I_\lambda}^A$.
\end{lm}
The reader should refer to Subsection \ref{sub:primitive_spectrum} for the definition of $\Prim^{I_\lambda} A$ and the induction map $\Ind_{I_\lambda}^A$.
\begin{proof}
  For all $J \in \Prim(A)$, there is a $\lambda \in \Lambda$ such that $I_\lambda \not\subset J$. Indeed, if that were not the case, then we would have $A = \sum_{\lambda\in\Lambda} I_\lambda \subset J$ so $J = A$, which is not a primitive ideal. Therefore there is a $\lambda \in \Lambda$ such that $J \in \Prim^{I_\lambda}(A)$, which proves the proposition.
\end{proof}

\begin{lm} \label{lm:multiplication_continuity}
  Let $\varphi \in C_0(X)$, and define $M_\varphi: C_c(\G) \to C_c(\G)$ by $M_\varphi(f) = \varphi f$. Then $M_\varphi$ extends as a continuous linear map from $C^*(\G)$ to itself. Moreover, if $U \subset X$ is a $\G$-invariant open subset of $X$ such that $\supp \varphi \subset U$, then $f \mapsto \varphi f$ extends as a continuous map from $C^*(\G)$ to $C^*(\G_U)$.
\end{lm}

The first part of the Lemma implies that $C_0(X)$ embeds into the multiplier algebra of $C^*(\G)$. 

\begin{proof}
  The first statement was proven in \cite[Proposition 1.14]{Ren1980}, in which it was shown that
  \begin{equation*}
     \|\varphi f\|_{C^*(\G)} \le \|\varphi\|_\infty \|f\|_{C^*(\G)}.
  \end{equation*}
  
  If $U \subset X$ is an open subset such that $\varphi \in C_c(U)$, then $\varphi\circ r \in C_c(\G^U)$. If $U$ is moreover $\G$-invariant, then $\G^U = \G_U$, so $\varphi f \in C_c(\G_U)$. We know from \cite{Ren1980} that $C^*(\G_U)$ is an ideal in $C^*(\G)$, so 
  \begin{equation*}
    \|\varphi f\|_{C^*(\G_U)} = \|\varphi f\|_{C^*(\G)} \le \|\varphi\|_\infty \|f\|_{C^*(\G)}.
  \end{equation*}
  This proves the continuity as a map to $C^*(\G_U)$.
\end{proof}

We are ready to prove one of the main theorems of this paper. Again, recall that the definition of $\Prim^U C^*(\G)$ and $\Ind_U$ were introduced in the previous subsection. 

\begin{thm} \label{thm:decomposition_of_the_spectrum}
  Let $\G \dr X$ be a locally compact, second countable and locally Hausdorff groupoid. Assume that we have a family of open subsets $(U_i)_{i\in I}$ such that their saturations $(\G\cdot U_i)_{i \in I}$ form an open cover of $X$. Then 
  \begin{equation*}
    \Prim C^*(\G) = \bigcup_{i \in I} \Prim C^*(\G|_{U_i}),
  \end{equation*}
  where we identify $\Prim C^*(\G|_{U_i})$ with its image $\Prim^{U_i} C^*(\G)$ through $\Ind_{U_i}$.
\end{thm}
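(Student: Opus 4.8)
The plan is to reduce the statement to the abstract decomposition of Lemma~\ref{lm:spectrum_decomposition_by_ideals}. Set $W_i \eqdef \G\cdot U_i$; since $d$ is continuous and $r$ is open, each $W_i = r(d^{-1}(U_i))$ is an open subset of $X$, and it is $\G$-invariant by construction. Consequently each $C^*(\G_{W_i})$ is a closed two-sided ideal of $C^*(\G)$, and the induction map $\Ind_{W_i} = \Ind_{C^*(\G_{W_i})}^{C^*(\G)}$ identifies $\Prim C^*(\G_{W_i})$ with the open set $\Prim^{W_i} C^*(\G)$. Applying Lemma~\ref{lm:spectrum_decomposition_by_ideals} to the family $(C^*(\G_{W_i}))_{i\in I}$ would immediately give $\Prim C^*(\G) = \bigcup_i \Prim^{W_i} C^*(\G)$, \emph{provided} that these ideals sum to all of $C^*(\G)$.

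The heart of the argument is therefore to prove that $\sum_{i\in I} C^*(\G_{W_i}) = C^*(\G)$. Since this sum is a closed ideal and $C_c(\G)$ is dense in $C^*(\G)$, it suffices to show that every $f \in C_c(\G)$ lies in the algebraic sum of finitely many of the $C^*(\G_{W_i})$. Given such an $f$, the set $K \eqdef r(\supp f)$ is a compact subset of the Hausdorff space $X$, and by hypothesis the $(W_i)$ cover $X$, so finitely many of them, say $W_{i_1},\dots,W_{i_n}$, already cover $K$. As $X$ is locally compact, Hausdorff and second countable, one can choose a partition of unity $\varphi_1,\dots,\varphi_n$ subordinate to this finite cover, with $\varphi_j \in C_c(W_{i_j})$ and $\sum_j \varphi_j = 1$ on $K$. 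Then $f = \sum_j \varphi_j f$, because the functions $\varphi_j f = (\varphi_j\circ r)\cdot f$ sum to $((\sum_j \varphi_j)\circ r)\cdot f$, which equals $f$ on $\supp f$. Finally, each $W_{i_j}$ is $\G$-invariant and $\supp \varphi_j \subset W_{i_j}$, so Lemma~\ref{lm:multiplication_continuity} gives $\varphi_j f \in C^*(\G_{W_{i_j}})$. This exhibits $f$ in the algebraic sum of the $C^*(\G_{W_{i_j}})$ and completes the step.

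It then remains to translate the decomposition into the stated form. Lemma~\ref{lm:prim_U_equals_prim_W} identifies $\Prim^{W_i} C^*(\G)$ with $\Prim^{U_i} C^*(\G)$, and Proposition~\ref{prop:induction_main_properties}~\eqref{it:prim} shows that $\Ind_{U_i}$ is a homeomorphism from $\Prim C^*(\G|_{U_i})$ onto $\Prim^{U_i} C^*(\G)$; since $\Ind_{U_i} = \Ind_{W_i}\circ \mathcal{E}_{U_i}\-Ind$ with the second factor a Morita homeomorphism, the image $\Ind_{U_i}(\Prim C^*(\G|_{U_i}))$ coincides, as a subset of $\Prim C^*(\G)$, with $\Ind_{W_i}(\Prim C^*(\G_{W_i})) = \Prim^{W_i} C^*(\G)$. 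Substituting these identifications into $\Prim C^*(\G) = \bigcup_i \Prim^{W_i} C^*(\G)$ yields the theorem. I expect the only genuinely nontrivial point to be the partition-of-unity step establishing $\sum_i C^*(\G_{W_i}) = C^*(\G)$; everything else is an assembly of the already-proven Lemmas~\ref{lm:spectrum_decomposition_by_ideals}, \ref{lm:prim_U_equals_prim_W}, \ref{lm:multiplication_continuity} and Proposition~\ref{prop:induction_main_properties}. It is worth checking that the local Hausdorffness of $\G$ causes no trouble here, but it does not: the partition of unity lives on the Hausdorff base $X$, and the multiplication $f \mapsto \varphi f$ is handled abstractly at the level of $C^*$-algebras by Lemma~\ref{lm:multiplication_continuity}.
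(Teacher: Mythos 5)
Your proof is correct and follows essentially the same route as the paper: reduce to Lemma~\ref{lm:spectrum_decomposition_by_ideals} by showing $\sum_i C^*(\G_{W_i}) = C^*(\G)$ via a partition of unity and Lemma~\ref{lm:multiplication_continuity}, then identify $\Prim^{W_i}C^*(\G)$ with $\Ind_{U_i}(\Prim C^*(\G|_{U_i}))$. Your reduction to a finite subcover over $r(\supp f)$ is in fact slightly more careful than the paper's direct use of an $I$-indexed partition of unity, but it is the same argument.
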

Theorem \ref{thm:decomposition_of_the_spectrum} is a localization result: the primitive spectrum of $C^*(\G)$ can be fully described by restricting our attention to sufficiently many reductions of $\G$ to open subsets of the unit space.

\begin{proof}
  Put $W_i \eqdef \G\cdot U_i$, for each $i \in I$. The assumption is that $(W_i)_{i\in I}$ is an open cover of $X$, so let $(\varphi_i)_{i\in I}$ be a partition of unity subordinate to that cover. If $a \in C^*(\G)$, then it follows from Lemma \ref{lm:multiplication_continuity} that $\varphi_i a$ is well defined for all $i$ and belongs to $C^*(\G_{W_i})$. Since $\sum_{i \in I} \varphi_i = 1$, we have $a =\sum_{i \in I} \varphi_i a$. Thus
  \begin{equation*}
    C^*(\G) = \sum_{i\in I} C^*(\G_{W_i}),
  \end{equation*}
  which is all we need to apply Lemma \ref{lm:spectrum_decomposition_by_ideals}. Therefore
  \begin{equation*}
    \Prim C^*(\G) = \bigcup_{i \in I} \Prim^{W_i} C^*(\G),
  \end{equation*}
  and we conclude with the identification $\Prim^{W_i}C^*(\G) = \Prim^{U_i}C^*(\G)$ established in Proposition \ref{prop:induction_main_properties}.
\end{proof}

\begin{rem}
  As was already highlighted in Remark \ref{rem:extends_to_reduced_algebras}, all results from this paper remain valid if we replace the full groupoid algebras with their \emph{reduced} couterparts. More explicitely, under the assumptions of Theorem \ref{thm:decomposition_of_the_spectrum}, there is a decomposition
  \begin{equation*}
    \Prim C^*_r(\G) = \bigcup_{i \in I} \Prim C^*_r(\G|_{U_i}),
  \end{equation*}
  where we identify $\Prim C^*_r(\G|_{U_i})$ with its image $\Prim^{U_i} C^*_r(\G)$ through $\Ind_{U_i}$. Note that the technical Lemma \ref{lm:multiplication_continuity} is much more easy to prove in the reduced case. Indeed there is no need to use Renault's disintegration theorem here, since we only have to deal with the regular representations of $\G$.
\end{rem}

\begin{rem}
  It should also be noted that a decomposition similar to that of Theorem \ref{thm:decomposition_of_the_spectrum} holds for the full spectrum of $C^*(\G)$ (i.e.\ equivalence classes of irreducible representations as defined in \cite{Dix1977}). Under the assumptions of Theorem \ref{thm:decomposition_of_the_spectrum}, we may write
  \begin{equation*}
    \hat{C^*(\G)} = \bigcup_{i \in I} \hat{C^*(\G|_{U_i})}.
  \end{equation*}
  where $\hat{C^*(\G|_{U_i})}$ is identified with its image through $\Ind_{U_i}$.
\end{rem}

\subsection{Families of representations}
\label{sub:families_of_representations}

The main motivation for Theorem \ref{thm:decomposition_of_the_spectrum} is to study the representations of $C^*(\G)$ from the representations of its reductions. In particular, it was proven in \cite[Proposition 2.1]{NP2017} that a family of representations $\mathcal{F}$ of a $C^*$-algebra $A$ is faithful if, and only if
\begin{equation*}
  \Prim A = \bar{\bigcup_{\pi \in \mathcal{F}} \supp \pi}.
\end{equation*}

\begin{cor} \label{cor:faithful_induction}
  We follow the assumptions of Theorem \ref{thm:decomposition_of_the_spectrum}. For each $i \in I$, let $\mathcal{F}_i$ be a faithful family of non-degenerate representations of $C^*(\G|_{U_i})$. Then the family
  \begin{equation*}
    \mathcal{F} \eqdef \{ \Ind_{U_i} \pi \mid i \in I, \pi \in \mathcal{F}_i \}
  \end{equation*}
  is faithful for $C^*(\G)$.
\end{cor}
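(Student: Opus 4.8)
The plan is to reduce everything to the faithfulness criterion recalled just above the statement: a family $\mathcal{F}$ of representations of a $C^*$-algebra $A$ is faithful if, and only if, $\Prim A = \overline{\bigcup_{\pi\in\mathcal{F}}\supp\pi}$. Applied to $A = C^*(\G)$ and to $\mathcal{F} = \{\Ind_{U_i}\pi\}$, the task becomes to show that the closure of $\bigcup_{i,\pi}\supp(\Ind_{U_i}\pi)$ is all of $\Prim C^*(\G)$. One inclusion is trivial since supports are subsets of $\Prim C^*(\G)$, so the real content is to prove that every primitive ideal of $C^*(\G)$ lies in this closure.

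First I would fix $i\in I$ and exploit the faithfulness of $\mathcal{F}_i$. By the same criterion applied to $C^*(\G|_{U_i})$, we have $\Prim C^*(\G|_{U_i}) = \overline{\bigcup_{\pi\in\mathcal{F}_i}\supp\pi}$, the closure being taken in $\Prim C^*(\G|_{U_i})$. The key step is to transport this identity through the induction map. By assertion \eqref{it:prim} of Proposition \ref{prop:induction_main_properties}, the map $\Ind_{U_i}$ is a homeomorphism of $\Prim C^*(\G|_{U_i})$ onto the open subset $\Prim^{U_i} C^*(\G)$; since a homeomorphism sends closures to closures, this gives $\Prim^{U_i} C^*(\G) = \overline{\bigcup_{\pi\in\mathcal{F}_i}\Ind_{U_i}(\supp\pi)}$, where the closure is now taken in the subspace $\Prim^{U_i} C^*(\G)$.

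Next I would insert the support inclusion $\Ind_{U_i}(\supp\pi)\subset\supp(\Ind_{U_i}\pi)$ furnished by assertion \eqref{it:support} of Proposition \ref{prop:induction_main_properties}, and pass from the subspace topology to that of the ambient $\Prim C^*(\G)$, using that the closure of a set in a subspace is contained in its closure in the ambient space. This yields $\Prim^{U_i}C^*(\G)\subset\overline{\bigcup_{j,\rho}\supp(\Ind_{U_j}\rho)}$, the closure now taken in $\Prim C^*(\G)$. Taking the union over $i$ and invoking Theorem \ref{thm:decomposition_of_the_spectrum}, which asserts $\Prim C^*(\G)=\bigcup_i\Prim^{U_i}C^*(\G)$, I would conclude that $\Prim C^*(\G)\subset\overline{\bigcup_{i,\pi}\supp(\Ind_{U_i}\pi)}$, hence equality. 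By the criterion, $\mathcal{F}$ is faithful.

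The only genuinely delicate point is the bookkeeping of two topologies: the faithfulness of $\mathcal{F}_i$ is a statement inside $\Prim C^*(\G|_{U_i})$, whereas the conclusion must live in $\Prim C^*(\G)$. The homeomorphism of assertion \eqref{it:prim} moves the first closure into the \emph{subspace} $\Prim^{U_i}C^*(\G)$, and the harmless passage from subspace closures to ambient closures finishes the argument; it is worth noting that only the one-sided inclusion of assertion \eqref{it:support}, and not an equality of supports, is needed.
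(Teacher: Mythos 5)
Your proposal is correct and follows essentially the same route as the paper: apply the Nistor--Prudhon faithfulness criterion, transport the identity $\Prim C^*(\G|_{U_i}) = \overline{\bigcup_{\pi\in\mathcal{F}_i}\supp\pi}$ through the homeomorphism $\Ind_{U_i}$ onto $\Prim^{U_i}C^*(\G)$, insert the one-sided support inclusion from Proposition \ref{prop:induction_main_properties}, and take the union over $i$ via Theorem \ref{thm:decomposition_of_the_spectrum}. If anything, your explicit bookkeeping of subspace versus ambient closures is slightly more careful than the paper's write-up, but the argument is the same.
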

\begin{proof}
  By assumption, for all $i \in I$ we have
  \begin{equation*}
    \Prim C^*(\G|_{U_i}) = \bar{\bigcup_{\pi \in \mathcal{F}_i} \supp \pi_i}.
  \end{equation*}
  Using Theorem \ref{thm:decomposition_of_the_spectrum}, we get
  \begin{equation} \label{eq:prim_equals_union}
    \Prim C^*(\G) = \bigcup_{i\in I} \Prim^{U_i} C^*(\G) = \bigcup_{i\in I} \bar{\bigcup_{\pi \in \mathcal{F}_i} \Ind_{U_i}(\supp \pi)}.
  \end{equation}
  It was proven in Proposition \ref{prop:induction_main_properties} that $\Ind_{U_i}(\supp \pi) \subset \supp(\Ind_{U_i} \pi)$ for any non-degenerate representation $\pi$ of $C^*(\G|_{U_i})$. Thus
  \begin{equation*}
    \bigcup_{\pi \in \mathcal{F}_i} \Ind_{U_i}(\supp \pi) \subset \bigcup_{\pi \in \mathcal{F}_i} \supp(\Ind_{U_i} \pi)
  \end{equation*}
  Together with Equation \eqref{eq:prim_equals_union}, we obtain
  \begin{equation*}
    \Prim C^*(\G) \subset \bigcup_{i\in I} \bar{\bigcup_{\pi \in \mathcal{F}_i} \supp(\Ind_{U_i} \pi)} \subset \bar{\bigcup_{i\in I} \bigcup_{\pi \in \mathcal{F}_i} \supp(\Ind_{U_i} \pi)} = \bar{\bigcup_{\pi \in \mathcal{F}} \supp \pi}.
  \end{equation*}
  The converse inclusion is trivial. This shows that $\mathcal{F}$ is a faithful family.
\end{proof}

As a direct application, recall that a groupoid $\G$ is called \emph{metrically amenable} if the canonical morphism $C^*(\G) \to C^*_r(\G)$ is an isomorphism \cite{Ren1980}.
\begin{cor}
  Under the assumptions of Theorem \ref{thm:decomposition_of_the_spectrum}, assume that each groupoid $\G|_{U_i}$ is metrically amenable, for all $i \in I$. Then $\G$ is metrically amenable.
\end{cor}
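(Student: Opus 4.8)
The plan is to recast metric amenability of $\G$ as a statement about faithfulness of its regular representations, and then feed the regular representations of the reductions through Corollary \ref{cor:faithful_induction}. Recall that the canonical surjection $\lambda : C^*(\G) \to C^*_r(\G)$ has kernel equal to the common kernel $\bigcap_{x \in X} \ker \pi_x$ of the regular representations; thus $\G$ is metrically amenable precisely when the family $\{\pi_x\}_{x \in X}$ is faithful on $C^*(\G)$. The same reformulation applies to each reduction: $\G|_{U_i}$ is metrically amenable if, and only if, the family $\mathcal{F}_i \eqdef \{\pi_x^{U_i} \mid x \in U_i\}$ of its regular representations is faithful on $C^*(\G|_{U_i})$.

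First I would fix, for each $i \in I$, the faithful family $\mathcal{F}_i$ supplied by the amenability hypothesis. Because $\G|_{U_i}$ is metrically amenable, these regular representations, a priori defined on $C^*_r(\G|_{U_i})$, are unambiguously non-degenerate representations of $C^*(\G|_{U_i})$, so Corollary \ref{cor:faithful_induction} applies verbatim and produces a faithful family
\begin{equation*}
  \mathcal{F} = \{ \Ind_{U_i} \pi_x^{U_i} \mid i \in I,\ x \in U_i \}
\end{equation*}
for $C^*(\G)$.

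Next I would identify each member of $\mathcal{F}$ using Proposition \ref{prop:induction_main_properties}\eqref{it:regular_rep}, which gives $\Ind_{U_i} \pi_x^{U_i} = \pi_x$ for every $x \in U_i$. Hence $\mathcal{F}$ is nothing but the subfamily $\{\pi_x \mid x \in \bigcup_{i} U_i\}$ of the regular representations of $\G$. Faithfulness of $\mathcal{F}$ means its common kernel in $C^*(\G)$ vanishes; since $\mathcal{F}$ is contained in the full family $\{\pi_x\}_{x \in X}$, the common kernel of the latter vanishes a fortiori. Therefore $\lambda$ is injective, hence an isomorphism, and $\G$ is metrically amenable.

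The argument is essentially a bookkeeping of these two ingredients, so there is no serious obstacle; the only point requiring care is the reformulation of metric amenability as faithfulness of the regular family, together with the observation that on a metrically amenable reduction the regular representations are genuine non-degenerate representations of the \emph{full} algebra $C^*(\G|_{U_i})$, which is exactly what lets us invoke Corollary \ref{cor:faithful_induction} rather than its reduced analogue. One could alternatively run the whole argument with the reduced induction functor of Remark \ref{rem:extends_to_reduced_algebras}, which sidesteps even this point.
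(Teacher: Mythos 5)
Your proof is correct and follows essentially the same route as the paper: reformulate metric amenability as faithfulness of the family of regular representations, apply Corollary \ref{cor:faithful_induction} to the families $\mathcal{F}_i = (\pi_x^{U_i})_{x \in U_i}$, and use Proposition \ref{prop:induction_main_properties}\eqref{it:regular_rep} to identify the induced representations with the regular representations of $\G$. Your remark that the resulting family is indexed by $\bigcup_i U_i$ rather than all of $X$, and that faithfulness of a subfamily suffices, is a small point of care that the paper glosses over.
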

\begin{proof}
  For each $i \in I$, let $\mathcal{F}_i = (\pi^{U_i}_x)_{x\in U_i}$ be the family of all regular representations of $\G|_{U_i}$. The groupoid $\G|_{U_i}$ is metrically amenable if, and only if, the family $\mathcal{F}_i$ is faithful for $C^*(\G|_{U_i})$. Now recall from Proposition \ref{prop:induction_main_properties} that $\Ind_{U_i} \pi_x^{U_i} = \pi_x$, which is the regular representation of $\G$ at $x$. Corollary \ref{cor:faithful_induction} implies that the family $\mathcal{F} =  (\pi_x)_{x\in X}$ is faithful for $C^*(\G)$. This in turn is equivalent to $\G$ being metrically amenable.
\end{proof}

\begin{defn}[ \citeauthor{NP2017} \cite{NP2017} ] \label{defn:exhaustive_families}
  Let $A$ be a $C^*$-algebra. A family $\mathcal{F}$ of representations of $A$ is called \emph{exhaustive} if
  \begin{equation*}
    \Prim A = \bigcup_{\pi \in \mathcal{F}} \supp \pi.
  \end{equation*}
\end{defn}
Exhaustive families provide a refinment of faithful families and will be used in Section \ref{sec:application_to_fredholm_groupoids}.

\begin{cor} \label{cor:exhaustive_families}
  We follow the assumptions of Theorem \ref{thm:decomposition_of_the_spectrum}. For each $i \in I$, let $\mathcal{F}_i$ be a exhaustive family of representations of $C^*(\G|_{U_i})$. Then the family
  \begin{equation*}
    \{ \Ind_{U_i} \pi \mid i \in I, \pi \in \mathcal{F}_i \}
  \end{equation*}
  is exhaustive for $C^*(\G)$.
\end{cor}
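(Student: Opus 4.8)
The plan is to chain together the exact spectral decomposition of Theorem \ref{thm:decomposition_of_the_spectrum} with the exhaustiveness of each family $\mathcal{F}_i$, observing that—in contrast to the faithful case treated in Corollary \ref{cor:faithful_induction}—no closure operation is involved at any stage. First I would rewrite Theorem \ref{thm:decomposition_of_the_spectrum} in the form
\begin{equation*}
  \Prim C^*(\G) = \bigcup_{i\in I} \Ind_{U_i}\bigl(\Prim C^*(\G|_{U_i})\bigr),
\end{equation*}
using that the identification of $\Prim C^*(\G|_{U_i})$ with $\Prim^{U_i} C^*(\G)$ is implemented precisely by the injective map $\Ind_{U_i}$.

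Next I would invoke the hypothesis that each $\mathcal{F}_i$ is exhaustive, that is $\Prim C^*(\G|_{U_i}) = \bigcup_{\pi\in\mathcal{F}_i}\supp\pi$. Since $\Ind_{U_i}$ is an honest map between sets, it commutes with unions, so
\begin{equation*}
  \Ind_{U_i}\bigl(\Prim C^*(\G|_{U_i})\bigr) = \bigcup_{\pi\in\mathcal{F}_i}\Ind_{U_i}(\supp\pi).
\end{equation*}
Substituting into the previous display yields the exact equality $\Prim C^*(\G) = \bigcup_{i\in I}\bigcup_{\pi\in\mathcal{F}_i}\Ind_{U_i}(\supp\pi)$.

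The final step applies the support inclusion $\Ind_{U_i}(\supp\pi)\subset\supp(\Ind_{U_i}\pi)$ established in Proposition \ref{prop:induction_main_properties}\eqref{it:support}, which gives
\begin{equation*}
  \Prim C^*(\G) \subset \bigcup_{i\in I}\bigcup_{\pi\in\mathcal{F}_i}\supp(\Ind_{U_i}\pi) = \bigcup_{\rho\in\mathcal{F}}\supp\rho.
\end{equation*}
The reverse inclusion is automatic, as every $\supp\rho$ is by definition a subset of $\Prim C^*(\G)$. Sandwiching therefore forces equality, which is exactly the exhaustiveness of $\mathcal{F}$.

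I expect no serious obstacle: the only point requiring care is to confirm that neither Theorem \ref{thm:decomposition_of_the_spectrum} nor the definition of exhaustiveness conceals a closure, so that the one-sided inclusion of Proposition \ref{prop:induction_main_properties}\eqref{it:support}, when combined with the trivial reverse containment, already upgrades the chain of inclusions to an equality. This is precisely what makes the statement a short corollary rather than a rerun of the faithful-family argument, where the closures forced one to stop at an inclusion.
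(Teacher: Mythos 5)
Your proof is correct and matches the paper's intended argument: the paper simply states that the proof is the same as that of Corollary \ref{cor:faithful_induction}, and your chain (exact decomposition from Theorem \ref{thm:decomposition_of_the_spectrum}, exhaustiveness of each $\mathcal{F}_i$, the support inclusion from Proposition \ref{prop:induction_main_properties}\eqref{it:support}, then sandwiching) is precisely that argument with the closures stripped out. Your closing observation that the absence of closures is what lets the one-sided inclusion upgrade to an equality is exactly the right point to check.
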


The proof is the same as that of Corollary \ref{cor:faithful_induction}.

\section{Fredholm groupoids}
\label{sec:application_to_fredholm_groupoids}

The class of Fredholm Lie groupoid was introduced by Carvalho, Nistor and Qiao in \cite{CNQ2017} as an important tool to study differential equations on manifolds with singularities. Our main result (Theorem \ref{thm:Fredholm_is_local}) is that the Fredholm property is \emph{local}: a groupoid is Fredholm if, and only if, all its reductions to open subsets of the unit space are also Fredholm groupoids. This motivates the definition of \emph{local action groupoids} in Subsection \ref{sub:Local_action_groupoids}, which occur naturally in many practical cases.

\subsection{Definitions}
\label{sub:fredholm_groupoids}
Let $\G \dr X$ be a locally compact, second countable, locally Hausdorff groupoid with a continuous right-invariant Haar system. Throughout this subsection, we will assume that there is a $\G$-invariant, open and dense orbit $V \subset X$ such that $\G_V \simeq V\times V$. Such a set $V$ is necessarily unique. Define the \emph{vector representation}
\begin{equation*}
  \pi_0 : C^*_r(\G) \to \B(L^2(V)),
\end{equation*}
as the equivalence class of any regular representation $\pi_x$, for any $x \in V$ (all those representations are conjugated through the action of $\G$ on its fibers $\G_x = d^{-1}(x)$).

Fredholm groupoid are tailored to study differential operators on $V$, so one usually asks $V$ to have a smooth structure: this is the case, for example, when $\G$ is a Lie groupoid, or more generally a continuous family groupoid \cite{LMN2000,Pat2001}. However, the differential setting is not needed for the results we seek; thus our definition of a Fredholm groupoids is a strict extension of the original one from \cite{CNQ2017}.

\begin{defn} \label{defn:fredholm_groupoid}
  A \emph{Fredholm groupoid} is a locally compact, second-countable, locally Hausdorff groupoid $\G \dr X$, endowed with a continuous right-invariant Haar system, such that 
  \begin{enumerate}
    \item \label{it:def1_dense_orbit} there is an open, dense $\G$-orbit $V$ such that $\G_V \simeq V\times V$,
    \item \label{it:pi_0_inj} the vector representation $\pi_0 : C^*_r(\G) \to \B(L^2(V))$ is injective, and
    \item \label{it:def1_bdry} for any $a \in C^*_r(\G)$, the operator $1 + \pi_0(a)$ is Fredholm in $\B(L^2(V))$ if, and only if, each operator $1 + \pi_x(a)$ is invertible, for every $x \in X \setminus V$.
  \end{enumerate}
\end{defn}

An equivalent definition of Fredholm groupoids was given in \cite{CNQ2017}. Recall the concept of an exhaustive family of representations from Definition \ref{defn:exhaustive_families}. 

\begin{prop} \label{prop:Fredholm_second_def}
  Let $\G \dr X$ be a locally compact, second-countable and locally Hausdorff groupoid, endowed with a continuous right-invariant Haar system. Then $\G$ is a Fredholm groupoid if, and only if, all the following conditions are met:
  \begin{enumerate}
    \item there is an open, dense $\G$-orbit $V$ such that $\G_V \simeq V\times V$,
    \item \label{it:def2_pi_0_inj} the vector representation $\pi_0 : C^*_r(\G) \to \B(L^2(V))$ is injective,
    \item \label{it:iso_quotient} the restriction map $C^*_r(\G) \to C^*_r(\G_F)$ induces an isomorphism
      \begin{equation*}
        C^*_r(\G)/C^*_r(\G_V) \simeq C^*_r(\G_F),
      \end{equation*}
      where $F = X \setminus V$, and
    \item \label{it:exhaustive_bdry} the family of representations $(\pi_x)_{x \in F}$ is exhaustive for $C^*_r(\G_F)$.
  \end{enumerate}
\end{prop}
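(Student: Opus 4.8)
The plan is to fix the closed, $\G$-invariant set $F = X\setminus V$ and to exploit that, under the two common hypotheses, the ideal $J \eqdef C^*_r(\G_V)$ is isomorphic to the compacts $\K(L^2(V))$, with $\pi_0$ restricting to the standard (faithful) representation of $J$ as $\K(L^2(V))$. Since $\pi_0$ is injective, it identifies $C^*_r(\G)$ with a $C^*$-subalgebra of $\B(L^2(V))$ containing $\K(L^2(V))$ as an ideal, and one checks $\pi_0^{-1}(\K(L^2(V))) = J$. First I would record the structural maps that hold regardless of conditions \eqref{it:iso_quotient} and \eqref{it:exhaustive_bdry}: restriction of functions gives a surjective $*$-homomorphism $\bar\rho : C^*_r(\G) \to C^*_r(\G_F)$ that annihilates $J$ (because $\|f|_{\G_F}\|_{C^*_r(\G_F)} = \sup_{x\in F}\|\pi_x(f)\| \le \|f\|_{C^*_r(\G)}$), hence factors as a surjection $\rho : C^*_r(\G)/J \to C^*_r(\G_F)$; and for $x\in F$ the regular representation $\pi_x$ kills $J$ and factors as $\pi_x = \pi_x^F\circ\rho$, where $\pi_x^F$ is the regular representation of $\G_F$. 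Condition \eqref{it:iso_quotient} of the proposition is exactly the statement that $\rho$ is injective, hence an isomorphism.

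Next I would carry out the operator-theoretic translation of condition \eqref{it:def1_bdry} of Definition \ref{defn:fredholm_groupoid}. By Atkinson's theorem, $1 + \pi_0(a)$ is Fredholm iff its image in the Calkin algebra $\B(L^2(V))/\K(L^2(V))$ is invertible; since $\pi_0(J) = \K(L^2(V))$ this image depends only on the class of $a$ in $C^*_r(\G)/J$, and by spectral permanence its invertibility in the Calkin algebra is equivalent to invertibility of the corresponding element $1 + [a]$ in the unital $C^*$-algebra $\widetilde{C^*_r(\G)/J}$. Thus, under the common hypotheses, condition \eqref{it:def1_bdry} is equivalent to the invertibility criterion $(\star)$: for every $a\in C^*_r(\G)$, the element $1 + [a]$ is invertible in $\widetilde{C^*_r(\G)/J}$ if and only if $1 + \pi_x(a)$ is invertible for every $x\in F$.

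For the forward implication, I would assume the proposition's conditions. By \eqref{it:iso_quotient}, $\rho$ is an isomorphism, so $\widetilde{C^*_r(\G)/J}\simeq\widetilde{C^*_r(\G_F)}$, with $1+[a]$ corresponding to $1 + \rho([a])$ and $\pi_x(a) = \pi_x^F(\rho([a]))$. Since the family $(\pi_x^F)_{x\in F}$ is exhaustive by \eqref{it:exhaustive_bdry}, the results of \cite{NP2017} show it is \emph{strictly spectral}: an element of $\widetilde{C^*_r(\G_F)}$ is invertible iff each $\pi_x^F$ sends it to an invertible operator. This is precisely $(\star)$, so Definition \ref{defn:fredholm_groupoid}\eqref{it:def1_bdry} holds.

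For the converse — which I expect to be the crux — I would assume $\G$ is Fredholm, i.e. $(\star)$ holds, and first extract \eqref{it:iso_quotient} by contradiction. If $\ker\rho \ne 0$, choose a positive $c\in\ker\rho$ with $\|c\| = 1$, so that $1\in\sigma(c)$ and $1 - c$ is not invertible; lifting $-c$ to some $a\in C^*_r(\G)$ gives $\pi_x(a) = \pi_x^F(\rho(-c)) = 0$, whence every $1 + \pi_x(a) = 1$ is invertible while $1 + [a] = 1 - c$ is not, contradicting $(\star)$. Hence $\rho$ is an isomorphism, which is \eqref{it:iso_quotient}. With this identification $(\star)$ becomes exactly the assertion that $(\pi_x^F)_{x\in F}$ is strictly spectral on $C^*_r(\G_F)$, which is separable by second countability; invoking the converse direction of \cite{NP2017} (strictly spectral implies exhaustive for separable $C^*$-algebras) yields \eqref{it:exhaustive_bdry}. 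The main obstacle is precisely this last step: the passage from an invertibility criterion back to exhaustiveness genuinely requires separability and the full strength of \cite{NP2017}. A secondary difficulty is the careful bookkeeping of unitizations and spectral permanence in the Atkinson translation, together with verifying that the reduced restriction map $\rho$ is well defined and surjective, which rests on $\G_F$ being a closed invariant subgroupoid.
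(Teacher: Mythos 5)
Your proposal is correct and takes essentially the same route as the proof this paper relies on: the paper does not reprove the proposition but defers to \cite{CNQ2017}, whose argument is exactly your combination of Atkinson's theorem (using that the injective vector representation identifies $C^*_r(\G_V)$ with $\K(L^2(V))$, so that $\pi_0^{-1}(\K(L^2(V))) = C^*_r(\G_V)$), the identification of $C^*_r(\G)/C^*_r(\G_V)$ with $C^*_r(\G_F)$, and the Nistor--Prudhon equivalence between exhaustive and strictly spectral families from \cite{NP2017}. You correctly isolate the two genuinely nontrivial points, namely that condition \eqref{it:iso_quotient} must be extracted from the Fredholm criterion (your positive-element argument in $\ker\rho$ is the standard one) and that the passage from strictly spectral back to exhaustive requires the separability supplied by second countability.
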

This Proposition was proven in \cite{CNQ2017} for Lie groupoids, but without making any use of the smooth structure: thus it extends without any modification to our setting. Note that Conditions \eqref{it:iso_quotient} and \eqref{it:exhaustive_bdry} may be checked at once by stating that the family $(\pi_x)_{x\in F}$ is exhaustive for the quotient algebra $C^*_r(\G)/C^*_r(\G_U)$.

Recall the definition of a \emph{metrically amenable} groupoid from Subsection \ref{sub:families_of_representations}.

\begin{thm} \label{thm:amenable_implies_Fredholm}
  Let $\G \dr X$ be a locally compact and second-countable groupoid endowed with a continuous right-invariant Haar system. Assume that there is an open, dense and $\G$-invariant subset $V \subset X$ such that $\G_V \simeq V\times V$, and put $F=X\setminus V$. Assume moreover that $\G$ is Hausdorff and $\G_F$ metrically amenable. Then $\G$ is Fredholm.
\end{thm}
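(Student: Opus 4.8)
The plan is to verify the four conditions of the equivalent characterization in Proposition \ref{prop:Fredholm_second_def} and then invoke it. Throughout, set $F = X\setminus V$, a closed $\G$-invariant subset. Since $\G_V \simeq V\times V$ is the pair groupoid, which is transitive, the set $V$ is automatically a single $\G$-orbit; this gives Condition (1) for free, and moreover identifies $C^*_r(\G_V) \simeq \K(L^2(V))$, with the vector representation $\pi_0$ restricting on this ideal to the standard (hence faithful) representation of the compacts on $L^2(V)$.

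For Condition (2), the injectivity of $\pi_0$, I would argue that $C^*_r(\G_V)$ is an \emph{essential} ideal of $C^*_r(\G)$, which follows from the density of $V$ in $X$. Since $\ker\pi_0$ is a closed two-sided ideal meeting $C^*_r(\G_V)$ trivially (because $\pi_0$ is faithful on this ideal, being the standard representation of $\K(L^2(V))$), essentiality forces $\ker\pi_0 = 0$. Note that amenability is not needed here.

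For Condition (3), I would produce the isomorphism from the short exact sequence of \emph{full} algebras. Since $\G$ is Hausdorff, one has the exact sequence
\[ 0 \to C^*(\G_V)\to C^*(\G)\xrightarrow{q} C^*(\G_F)\to 0, \]
where $q$ is induced by restriction $C_c(\G)\to C_c(\G_F)$ (Hausdorffness is exactly what makes this restriction surjective, via a Tietze extension argument). The pair groupoid $\G_V$ is amenable, so $C^*(\G_V) = C^*_r(\G_V)$, and $\G_F$ is metrically amenable by hypothesis, so $C^*(\G_F) = C^*_r(\G_F)$. On the reduced side, for $x\in F$ (using $F$ invariant, so $\G_x\subset\G_F$) the representation $\pi_x$ of $\G$ satisfies $\pi_x(f)=\pi_x^F(f|_{\G_F})$, whence $\|f|_{\G_F}\|_{C^*_r(\G_F)} \le \|f\|_{C^*_r(\G)}$; thus restriction descends to a surjection $q_r : C^*_r(\G)\to C^*_r(\G_F)$ with $C^*_r(\G_V)\subseteq\ker q_r$. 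A diagram chase against the canonical surjection $C^*(\G)\to C^*_r(\G)$, using exactness of the top row together with the two outer isomorphisms, then shows $\ker q_r = C^*_r(\G_V)$, which is precisely Condition (3).

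For Condition (4), I need the family $(\pi_x)_{x\in F}$ to be exhaustive for $C^*_r(\G_F)$, and I expect this to be the main obstacle. Faithfulness of the family is immediate from the definition of the reduced norm, but exhaustiveness is strictly stronger: it requires every primitive ideal of $C^*_r(\G_F)$ to contain a single $\ker\pi_x$, equivalently every irreducible representation to be weakly contained in \emph{one} regular representation. This is where metric amenability of $\G_F$ is used beyond Condition (3): orbit by orbit, an irreducible representation of $C^*_r(\G_F)=C^*(\G_F)$ is induced from an irreducible representation of an isotropy group, and amenability makes that representation weakly contained in the regular representation of the isotropy group, so that induction yields weak containment in the corresponding $\pi_x$. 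I would package this as the statement that the regular representations of a second-countable metrically amenable groupoid form an exhaustive family, citing \cite{CNQ2017,NP2017}. With Conditions (1)--(4) established, Proposition \ref{prop:Fredholm_second_def} gives that $\G$ is Fredholm.
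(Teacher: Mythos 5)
Your proposal is correct and follows essentially the same route as the paper: both verify the conditions of Proposition \ref{prop:Fredholm_second_def}, obtaining injectivity of $\pi_0$ from the essentiality of the ideal $C^*_r(\G_V)$ (the paper cites the Khoshkam--Skandalis lemma for this, which does require Hausdorffness and not just density of $V$), the quotient isomorphism from metric amenability of $\G_V$ and $\G_F$ via the full/reduced exact sequences, and exhaustiveness of $(\pi_x)_{x\in F}$ from the Nistor--Prudhon theorem resting on the Effros--Hahn conjecture for amenable groupoids \cite{NP2017,IW2009}. The paper's proof is itself only a sketch deferring to \cite{CNQ2017}, so your somewhat more detailed write-up of the same steps is entirely in order.
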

Theorem \ref{thm:amenable_implies_Fredholm} gives a sufficient condition for Fredholmness which is satisfied by many groupoids encountered in practical cases (see Subsection \ref{sub:Examples} for examples). When $\G$ is moreover a Lie groupoid, its dense orbit $V$ is called a manifold with \emph{amenable ends} \cite{CNQ2017}.

\begin{proof}
  This result was proven in \cite{CNQ2017} for Lie groupoids, so we will only give a sketch of the proof here. First, it follows from a lemma of Khoskham and Skandalis \cite{KS2002} (and the density of $V$ in $X$) that the vector representation is always injective when $\G$ is Hausdorff. This proves Condition \eqref{it:def2_pi_0_inj} of Proposition \ref{prop:Fredholm_second_def}. 
  
  The amenability of $\G_F$ and $\G_V \simeq V\times V$ imply that $\G$ is also metrically amenable. It is then a standard fact that the restriction map induces an isomorphism $C^*_r(\G)/C^*_r(\G_U) \simeq C^*_r(\G_F)$ \cite{Ren1980}, which proves Condition \eqref{it:iso_quotient}. Condition \eqref{it:exhaustive_bdry} is a result of Nistor and Prudhon: if $\G_F$ is metrically amenable, then its set of regular representations $(\pi_x)_{x\in F}$ is exhaustive for $C^*_r(\G_F)$ \cite[Theorem 3.18]{NP2017}. This follows from the Effros-Hahn conjecture, which was proven for amenable groupoids \cite{IW2009}.
\end{proof}

Many examples of Fredholm groupoids (as well as their relation with the study of differential equations on open manifolds) will be given in Section \ref{sec:Applications to differential equations}.

\subsection{The Fredholm property is local}
\label{sub:The Fredholm property is local}
Our aim in this section is to use the results of Section \ref{sec:representations_induced_from_reductions} to prove our main result, Theorem \ref{thm:main}. In a nutshell, we show that a groupoid $\G$ is Fredholm if, and only if, all its reductions to open subsets of the unit are Fredholm.

\begin{lm} \label{lm:Fredholm_goes_to_reductions}
  Let $\G \dr X$ be a Fredholm groupoid. Then, for any open set $U \subset X$, the reduction $\G|_U$ is also a Fredholm groupoid.
\end{lm}
\begin{proof}
  Let $V \subset X$ be the unique open dense $\G$-orbit such that $\G_V \simeq V\times V$, and put $F = X \setminus V$. Then $V' \eqdef U \cap V$ is the unique open dense $\G|_U$-orbit such that $\G|_{V'} \simeq V' \times V'$.

  Let $a \in C^*_r(\G|_U)$. Because $\pi_0$ is injective on $C^*_r(\G)$ and $\pi_0(C^*_r(V'\times V')) \simeq \K(L^2(V'))$, there is an induced isomorphism
  \begin{equation*}
    \pi_0 : C^*_r(\G|_{U})/C^*_r(\G|_{V'}) \simeq \pi_0(C^*_r(\G|_{U}))/\K(L^2(V')).
  \end{equation*}
  
  Therefore, for any $a \in C^*_r(\G|_U)$, the operator $1 + \pi_0(a)$ is Fredholm in $\B(L^2(V'))$ if, and only if, the class of $1+a$ is invertible in the unitarization of $C^*_r(\G|_U)/C^*_r(\G|_{V'})$. But $C^*_r(\G|_U)$ is a subalgebra of $C^*_r(\G)$, and $C^*_r(\G|_{V'}) = C^*_r(\G_V) \cap C^*_r(\G|_U)$. Thus
  \begin{equation*}
    C^*_r(\G|_U) / C^*_r(\G|_{V'}) \subset C^*_r(\G)/C^*(\G_V).
  \end{equation*}
  Hence, $1+a$ is invertible in the unitarization of $C^*_r(\G|_U) / C^*_r(\G|_{V'})$ if, and only if, it is invertible as an element of the unitarization of $C^*_r(\G)/C^*(\G_V)$. 
  
  Now, since $\G$ is a Fredholm groupoid, we deduce that $1 + \pi_0(a)$ is Fredholm in $\B(L^2(V'))$ if, and only if, the operator $1 + \pi_x(a)$ is invertible for each $x \in F$. But $\pi_x(a) =0$ for all $x \notin U$. Therefore, the operator $1 + \pi_0(a)$ is Fredholm if, and only if, the operator $1 + \pi_x(a)$ is invertible for each $x \in F \cap U = U \setminus V'$. This proves that $\G|_U$ is a Fredholm groupoid.
\end{proof}

We now establish the converse of Lemma \ref{lm:Fredholm_goes_to_reductions}.

\begin{thm} \label{thm:Fredholm_is_local}
  Let $\G\dr X$ be a locally compact, second-countable and locally Hausdorff groupoid, endowed with a right-invariant Haar system. Assume that
  \begin{enumerate}
    \item there is an open dense $\G$-invariant subset $V \subset X$ with $\G_V \simeq V\times V$, and
    \item we have a family $(U_i)_{i\in I}$ of open subsets of $X$ such that the saturations $(\G\cdot U_i)_{i\in I}$ provide an open cover of $X$.
  \end{enumerate}
  Then $\G$ is a Fredholm groupoid if, and only if, each reduction $\G|_{U_i}$ is also a Fredholm groupoid, for every $i \in I$.
\end{thm}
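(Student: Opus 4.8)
One direction is immediate from Lemma \ref{lm:Fredholm_goes_to_reductions}: if $\G$ is Fredholm, then so is every reduction $\G|_{U_i}$, since each $U_i$ is an open subset of $X$. So the whole content of the theorem is the converse, and that is where I would spend all my effort.

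The plan is to verify the four conditions of Proposition \ref{prop:Fredholm_second_def} for $\G$, assuming they hold for each $\G|_{U_i}$, using Theorem \ref{thm:decomposition_of_the_spectrum} (in its reduced-algebra form) as the main engine. Condition (1) is given by hypothesis. For the injectivity of the vector representation $\pi_0$ (Condition \eqref{it:def2_pi_0_inj}), I would argue as follows. Since $V$ is dense and $\G$-invariant, for each $i$ the set $V'_i := U_i \cap V$ is the dense orbit of $\G|_{U_i}$, and because $\G|_{U_i}$ is Fredholm its own vector representation is injective. The key observation is that, by Proposition \ref{prop:induction_main_properties}\eqref{it:regular_rep}, for $x \in V'_i$ the induced representation $\Ind_{U_i} \pi_x^{U_i}$ equals $\pi_x$, which (as $x \in V$) is unitarily equivalent to $\pi_0$. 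I would then use Corollary \ref{cor:faithful_induction}: a faithful family for each reduction induces a faithful family for $\G$. Taking for $\mathcal{F}_i$ the regular representations $(\pi_x^{U_i})_{x \in U_i}$, whose union of supports is controlled by the Fredholm structure of $\G|_{U_i}$, I would deduce that the family $(\pi_x)_{x \in X}$ separates the relevant ideals, and in particular that $\pi_0$ is injective on $C^*_r(\G)$.

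The heart of the argument is Conditions \eqref{it:iso_quotient} and \eqref{it:exhaustive_bdry}, which together say that the family $(\pi_x)_{x \in F}$, with $F = X \setminus V$, is exhaustive for the quotient $C^*_r(\G)/C^*_r(\G_V)$. Here is where the decomposition of the spectrum does the work. The quotient algebra has primitive spectrum naturally identified with $\Prim_V C^*_r(\G)$, the primitive ideals containing $C^*_r(\G_V)$. I would apply Theorem \ref{thm:decomposition_of_the_spectrum} to cover $\Prim C^*_r(\G)$ by the images $\Ind_{U_i}(\Prim C^*_r(\G|_{U_i}))$, and then restrict attention to the closed subset corresponding to $F$. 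On each reduction, Fredholmness of $\G|_{U_i}$ gives (via Proposition \ref{prop:Fredholm_second_def} applied to $\G|_{U_i}$) that the boundary regular representations $(\pi_x^{U_i})_{x \in U_i \setminus V'_i}$ are exhaustive for the corresponding quotient. Pushing these forward with $\Ind_{U_i}$ via Corollary \ref{cor:exhaustive_families}, and using again Proposition \ref{prop:induction_main_properties}\eqref{it:regular_rep} to identify $\Ind_{U_i} \pi_x^{U_i} = \pi_x$ for $x \in U_i$, I expect to conclude that the full family $(\pi_x)_{x \in F}$ is exhaustive for $C^*_r(\G)/C^*_r(\G_V)$, delivering both remaining conditions at once.

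The main obstacle I anticipate is bookkeeping at the level of the \emph{quotient} rather than the whole algebra. Theorem \ref{thm:decomposition_of_the_spectrum} and Corollary \ref{cor:exhaustive_families} are stated for the algebras $C^*_r(\G)$ and $C^*_r(\G|_{U_i})$ themselves, whereas Fredholmness is a statement about the quotients by the dense-orbit ideals $C^*_r(\G_V)$ and $C^*_r(\G|_{V'_i})$. I would need to check that inducing and restricting to $F$ are compatible — i.e.\ that $\Ind_{U_i}$ maps $\Prim_{V'_i} C^*_r(\G|_{U_i})$ into $\Prim_V C^*_r(\G)$, so that the decomposition descends cleanly to the boundary quotients. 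Granting that compatibility (which should follow from the identity $C^*_r(\G|_{V'_i}) = C^*_r(\G_V) \cap C^*_r(\G|_{U_i})$ already used in Lemma \ref{lm:Fredholm_goes_to_reductions}, together with the homeomorphism onto $\Prim^{U_i}$ in Proposition \ref{prop:induction_main_properties}\eqref{it:prim}), the exhaustiveness of the boundary family for $\G$ follows by covering and the proof is complete.
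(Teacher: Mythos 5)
Your skeleton is exactly the paper's: reduce to Proposition \ref{prop:Fredholm_second_def}, get injectivity of $\pi_0$ from Corollary \ref{cor:faithful_induction}, and get conditions \eqref{it:iso_quotient} and \eqref{it:exhaustive_bdry} simultaneously by showing $(\pi_x)_{x\in F}$ is exhaustive for $C^*_r(\G)/C^*_r(\G_V)$ via Theorem \ref{thm:decomposition_of_the_spectrum}, Corollary \ref{cor:exhaustive_families} and the identity $\Ind_{U_i}\pi_x^{U_i}=\pi_x$. Two execution points deserve attention. First, for the injectivity of $\pi_0$: faithfulness of the whole family $(\pi_x)_{x\in X}$ on $C^*_r(\G)$ is automatic from the definition of the reduced norm and does not by itself yield that the single representation $\pi_0$ is injective, so your ``in particular'' is a non sequitur as written. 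The clean move (and the paper's) is to take $\mathcal{F}_i=\{\pi_0^i\}$, the vector representation of the reduction, which is faithful precisely because $\G|_{U_i}$ is Fredholm, and whose induction is $\pi_0$ itself; alternatively your family works if you add the weak containments $\pi^{U_i}_x\prec\pi^i_0$ pushed through Proposition \ref{prop:induction_main_properties}\eqref{it:weak_containment}. Second, the compatibility you flag at the end (that $\Ind_{U_i}$ carries the boundary part of $\Prim C^*_r(\G|_{U_i})$ into $\Prim_V C^*_r(\G)$) is the real crux, and your proposed fix via $C^*_r(\G|_{V'_i})=C^*_r(\G_V)\cap C^*_r(\G|_{U_i})$ and the Rieffel correspondence is viable; the paper sidesteps proving it directly by applying Theorem \ref{thm:decomposition_of_the_spectrum} a \emph{second} time, to the pair groupoid $\G_V$ with the cover $(\G\cdot V_i)_{i\in I}$ of $V$, identifying $\bigcup_i\Prim C^*_r(\G|_{V_i})=\Prim C^*_r(\G_V)$ and then deducing $\Prim(C^*_r(\G)/C^*_r(\G_V))\subset\bigcup_i\Prim C^*_r(\G|_{F_i})$ by elimination from the disjoint decomposition $\Prim C^*_r(\G)=\Prim C^*_r(\G_V)\sqcup\Prim(C^*_r(\G)/C^*_r(\G_V))$. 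Either route closes the argument; the paper's avoids any new lemma, while yours would make the compatibility of induction with invariant ideals explicit. Do not forget the easy reverse inclusion $\bigcup_{x\in F}\supp\pi_x\subset\Prim(C^*_r(\G)/C^*_r(\G_V))$, which comes from $\pi_x$ vanishing on $C^*_r(\G_V)$ for $x\in F$.
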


Theorem \ref{thm:Fredholm_is_local} is the main result of this paper. It emphasizes the fact that the Fredholmness of a groupoid $\G$ is determined by its local structure. In particular, what really matters is the local structure in a neighborhood of the closed set $F = X \setminus V$, or in other words how the groupoid $\G_F$ is glued to the pair groupoid $\G_V = V \times V$.

\begin{proof}[Proof of Theorem \ref{thm:Fredholm_is_local}]
  Assume that each reduction $\G|_{U_i}$ is a Fredholm groupoid, and let $V_i\subset U_i$ be the unique open dense $\G|_{U_i}$-orbit such that $\G|_{V_i} \simeq V_i \times V_i$. We only have to prove that $\G$ satisfies the assumptions \eqref{it:def2_pi_0_inj}, \eqref{it:iso_quotient} and \eqref{it:exhaustive_bdry} of Proposition \ref{prop:Fredholm_second_def}.

  First, for any $i \in I$, let $\pi_0^i : C^*_r(\G|_{V_i}) \to \B(L^2(V_i))$ be the vector representation of $\G|_{V_i}$. We know from Proposition \ref{prop:induction_main_properties} that $\Ind_{U_i} \pi_0^i$ is the vector representation $\pi_0$ of $C^*_r(\G)$ on $\B(L^2(V))$. Moreover, because $\G|_{U_i}$ is Fredholm, the representation $\pi_0^i$ is faithful. Corollary \ref{cor:faithful_induction} implies that $\pi_0$ is a faithful representation of $C^*(\G)$, which proves Assumption \eqref{it:def2_pi_0_inj}.

  Now, because $(\G\cdot U_i)_{i \in I}$ is an open cover of $X$, Theorem \ref{thm:decomposition_of_the_spectrum} implies that
  \begin{equation*}
    \Prim C^*_r(\G) = \bigcup_{i\in I} \Prim C^*_r(\G|_{U_i}).
  \end{equation*}
  Since $V_i$ is a $\G|_{U_i}$-invariant open subset of $U_i$, we may expand this decomposition:
  \begin{equation} \label{eq:proof_decompo_1}
    \begin{aligned}
      \Prim C^*_r(\G) & = \bigcup_{i\in I} \big( \Prim C^*_r(\G|_{V_i}) \bigsqcup \Prim \left(C^*_r(\G|_{F_i})\right)\big)\\
                      & = \left(\bigcup_{i\in I} \Prim C^*_r(\G|_{V_i})\right) \bigcup \left( \bigcup_{i \in I} \Prim \left(C^*_r(\G|_{F_i})\right) \right),
    \end{aligned}
  \end{equation}
  where we have put $F_i \eqdef U_i \setminus V_i$ and used the isomorphism $C^*_r(\G|_{U_i})/C^*_r(\G|_{V_i}) \simeq C^*_r(\G_{F_i})$ given by the fact that $\G|_{U_i}$ is a Fredholm groupoid. But the family $(\G\cdot V_i)_{i\in I}$ is an open cover of $V$, so another application of Theorem \ref{thm:decomposition_of_the_spectrum} yields
  \begin{equation*}
    \Prim C^*_r(\G_V) = \bigcup_{i \in I} \Prim C^*_r(\G|_{V_i}).
  \end{equation*}
  By substituting this last expression in Equation \eqref{eq:proof_decompo_1}, we obtain
  \begin{equation} \label{eq:proof_decompo_C_r_G}
\Prim C^*_r(\G) = \Prim C^*_r(\G_V) \bigcup \left(\bigcup_{i\in I} \Prim C^*_r(\G|_{F_i})\right),
  \end{equation}

   On the other hand, because $V$ is a $\G$-invariant open subset, there is also a decomposition
   \begin{equation} \label{eq:proof_V_invariant}
     \Prim C^*_r(\G) = \Prim C^*_r(\G_V) \bigsqcup \Prim( C^*_r(\G)/C^*_r(\G_V) )
   \end{equation}
   Combining Equations \eqref{eq:proof_decompo_C_r_G} and \eqref{eq:proof_V_invariant} proves the inclusion
  \begin{equation*}
    \Prim( C^*_r(\G)/C^*_r(\G_V) ) \subset \bigcup_{i \in I} \Prim C^*_r(\G|_{F_i}).
  \end{equation*}

  For $i \in I$ and $x \in U_i$, let us denote by $\pi_x^{i}$ the regular representation of $\G|_{U_i}$ at $x$. Recall from Proposition \ref{prop:induction_main_properties} that $\Ind_{U_i} (\pi_x^i) = \pi_x$ (with $\pi_x$ the regular representation of $\G$ at $x$), so $\Ind_{U_i}(\supp \pi_x^i) \subset \supp \pi_x$. Since $\G|_{U_i}$ is a Fredholm groupoid, the family $(\pi_x^{i})_{x \in F_i}$ is exhaustive for $C^*_r(\G|_{F_i})$; in other words $\Prim C^*_r(\G|_{F_i})$ is the union of the support of every $\supp(\pi^i_x)$, for $x \in F_i$. Therefore
  \begin{equation*}
    \Prim( C^*_r(\G)/C^*_r(\G_V) ) \subset \bigcup_{i \in I} \bigcup_{x \in F_i} \Ind_{U_i}(\supp \pi_x^i) \subset \bigcup_{x \in F} \supp \pi_x,
  \end{equation*}
  with $F \eqdef X \setminus V = \bigcup_{i \in I} F_i$. On the other hand, the representation $\pi_x$ vanishes on $C^*_r(\G_V)$ for any $x \in F$, so that $\supp \pi_x$ is contained in $\Prim( C^*_r(\G)/C^*_r(\G_V) )$. This proves the equality
    \begin{equation*}
      \Prim( C^*_r(\G)/C^*_r(\G_V) ) = \bigcup_{x \in F} \supp \pi_x,
    \end{equation*}
    which by definition indicates that the family $(\pi_x)_{x \in F}$ is exhaustive for the quotient algebra $C^*_r(\G)/C^*_r(\G_V)$. This proves Assumptions \eqref{it:iso_quotient} and \eqref{it:exhaustive_bdry} of Proposition \ref{prop:Fredholm_second_def} and concludes the proof that $\G$ is a Fredholm groupoid. Finally, the ``only if'' part of Theorem \ref{thm:Fredholm_is_local} is a consequence of Lemma \ref{lm:Fredholm_goes_to_reductions} above.
\end{proof}

\subsection{Consequences}
\label{sub:Fredholm_applications}

We give here several corollaries of Theorema \ref{thm:Fredholm_is_local}, which may be used as tools to check the Fredholm property for a given groupoid. Some concrete examples of groupoids and applications of these results will be shown in Section \ref{sec:Applications to differential equations}.

\subsubsection{Gluing groupoids}
\label{sub:Gluing groupoids}

Let $(U_i)_{i \in I}$ be an open cover of a locally compact, Hausdorff space $X$. Assume that we are being given a family of locally compact groupoids $(\G_i \dr U_i)_{i\in I}$ with isomorphisms
\begin{equation*}
  \phi_{ji} : \G_i|_{U_i \cap U_j} \to \G_j|_{U_i\cap U_j},
\end{equation*}
for all $i,j \in I$. A natural construction would be to glue this family into a \enquote{bigger} groupoid $\G \dr X$. This requires some compatibility assumptions on the family $(\G_i)_{i\in I}$, as was studied in \cite{GL2014, Com2018}. For instance, the family is said to satisfy the \emph{weak gluing condition} of \cite{Com2018} if
\begin{enumerate}
  \item the isomorphisms $(\phi_{ij})$ satisfy a cocycle condition with $\phi_{kj}\phi_{ji} = \phi_{ki}$ and $\phi_{ji} = (\phi_{ij})^{-1}$; and
  \item for any any $i,j \in I$ and any composable pair $(g_i,h_j) \in \G_i \times \G_j$ (that is, such that the domain of $g_i$ and the range of $g_j$ coincide), there is a $k \in I$ and a composable pair $(g_k,h_k) \in \G_k^{(2)}$ with $\phi_{ik}(g_k) = g_i$ and $\phi_{jk}(h_k) = h_j$.
\end{enumerate}
Under those assumptions, the fibered coproduct of the family $(\G_i)_{i \in I}$ along the isomorphisms $(\phi_{ij})_{i,j\in I}$, which we write $\G \eqdef \bigcup_{i \in I} \G_i$ acquires a natural groupoid structure over $X$ (see \cite{Com2018}). The construction of this glued groupoid is such that each reduction $\G|_{U_i}$ is naturally isomorphic to $\G_i$.

\begin{thm}
  Let $(U_i)_{i \in I}$ be an open cover of a locally compact Hausdorff space $X$, and let $(\G_i\dr U_i)_{i\in I}$ be a family of groupoids satisfying the weak gluing condition. Let $\G = \bigcup_{i \in I} \G_i$ be the glued groupoid, and assume that
  \begin{enumerate}
    \item there is an open dense $\G$-invariant subset $V \subset X$ with $\G_V \simeq V\times V$, and
    \item each groupoid $\G_i$ is Fredholm, for $i \in I$.
  \end{enumerate}
  Then the groupoid $\G$ is Fredholm.
\end{thm}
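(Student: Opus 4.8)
The plan is to reduce this gluing theorem to the main localization result, Theorem \ref{thm:Fredholm_is_local}, by exhibiting the glued groupoid $\G$ together with the given open cover $(U_i)_{i\in I}$ as an instance of its hypotheses. The key structural fact, recalled at the end of Subsection \ref{sub:Gluing groupoids}, is that each reduction $\G|_{U_i}$ is naturally isomorphic to $\G_i$. This is exactly the bridge we need: the theorem hands us that each $\G_i$ is Fredholm, and we want to conclude that $\G$ is Fredholm.

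First I would check that the two hypotheses of Theorem \ref{thm:Fredholm_is_local} are satisfied. Hypothesis (1), the existence of an open dense $\G$-invariant orbit $V$ with $\G_V \simeq V\times V$, is assumed outright in the statement, so nothing is required there. For hypothesis (2), I would observe that since $(U_i)_{i\in I}$ is already an open cover of $X$, and since $U_i \subset \G\cdot U_i$ always holds (every unit lies in its own saturation), the saturations $(\G\cdot U_i)_{i\in I}$ a fortiori form an open cover of $X$. I should be slightly careful that $\G\cdot U_i$ is open; this follows because $d$ is open and $r$ is open (as noted after Definition \ref{defn:topological_groupoid}), so $\G\cdot U_i = r(d^{-1}(U_i))$ is the continuous open image of the open set $d^{-1}(U_i)$.

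Next I would transport the Fredholm hypothesis through the isomorphisms $\G|_{U_i} \simeq \G_i$. The notion of a Fredholm groupoid (Definition \ref{defn:fredholm_groupoid}) is defined purely in terms of the reduced $C^*$-algebra, the vector representation, and the regular representations; all of these are preserved by a groupoid isomorphism, since such an isomorphism induces a $*$-isomorphism of the convolution algebras $C_c$ and hence of the completions $C^*_r$, intertwining the regular representations. Therefore, $\G_i$ Fredholm implies $\G|_{U_i}$ Fredholm for each $i\in I$. With both hypotheses of Theorem \ref{thm:Fredholm_is_local} verified and each reduction $\G|_{U_i}$ Fredholm, that theorem immediately yields that $\G$ is Fredholm.

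The only genuine subtlety — and the step I would treat most carefully — is verifying that the ambient groupoid $\G$ satisfies the blanket standing hypotheses of Theorem \ref{thm:Fredholm_is_local}, namely that $\G$ is locally compact, second-countable, locally Hausdorff, and carries a right-invariant Haar system. The gluing construction of \cite{Com2018} should be invoked to guarantee that the glued topological groupoid inherits local compactness and local Hausdorffness from the $\G_i$, and that the local Haar systems on the $\G_i$, being compatible under the cocycle isomorphisms $\phi_{ij}$, patch together into a global right-invariant Haar system on $\G$; second-countability follows if the cover is countable or each $\G_i$ is second-countable. These are essentially bookkeeping points rather than conceptual obstacles, but they are the place where the proof could go wrong, so I would state explicitly that they follow from the gluing construction before applying Theorem \ref{thm:Fredholm_is_local}.
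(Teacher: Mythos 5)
Your proposal is correct and follows exactly the paper's own argument: identify each reduction $\G|_{U_i}$ with the Fredholm groupoid $\G_i$ via the gluing construction, note that the $(U_i)_{i\in I}$ (hence their saturations) cover $X$, and apply Theorem \ref{thm:Fredholm_is_local}. The additional care you take over openness of saturations and the standing hypotheses on the glued groupoid is sound but not needed beyond what the paper delegates to the gluing construction of \cite{Com2018}.
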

\begin{proof}
  By construction, each reduction $\G|_{U_i}$ is isomorphic to $\G_i$, hence Fredholm. Since the family $(U_i)_{i \in I}$ is an open cover of $X$, the result is a direct consequence of Theorem \ref{thm:Fredholm_is_local}.
\end{proof}

\subsubsection{Local isomorphisms}
\label{ssub:Local isomorphisms}
Theorems \ref{thm:decomposition_of_the_spectrum} and \ref{thm:Fredholm_is_local} state that the primitive spectrum of a groupoid's $C^*$-algebra can be studied locally. This suggests the following notion of \emph{local isomorphisms} between groupoids.

\begin{defn}
  Let $\G \dr X$ and $\H \dr Y$ be two topological groupoid, and let $p \in X$. 
  \begin{enumerate}
    \item A \emph{local isomorphism} between $\G$ and $\H$ around $p$ is a triplet $(U,\phi,V)$, where $U \subset X$ and $V \subset Y$ are open subsets, with $p \in U$ and 
  \begin{equation*}
    \phi : \G|_U \to \H|_V
  \end{equation*}
  is an isomorphism of topological groupoids.

\item We say that $\G$ is \emph{locally isomorphic} to $\H$ around $p$, and we write $\G \sim_p \H$, if there exists an isomorphism between $\G$ and $\H$ around $p$.
  \end{enumerate}
\end{defn}

Recall that the \emph{direct product} of two groupoids $\G\dr X$ and $\H \dr Y$ is the groupoid $\G\times\H$, with units $X \times Y$, whose structural morphisms are the direct products of those of $\G$ and $\H$.
\begin{lm} \label{lm:local_iso_product}
  Let $\G_1 \dr X_1$ and $\G_2\dr X_2$ be two topological groupoids. Let $p_1 \in X_1$ and $p_2 \in X_2$. Assume that there are groupoids $\H_1, \H_2$ such that $\G_1 \sim_{p_1} \H_1$ and $\G_2 \sim_{p_2} \H_2$. Then $\G_1 \times \G_2$ is locally isomorphic to $\H_1\times\H_2$ near $(p_1,p_2)$.
\end{lm}
\begin{proof}
  By assumptions, there are isomorphisms $\phi_1 : \G_1|_{U_1} \to \H_1|_{V_1}$ and $\phi_2 : \G_2|_{U_2} \to \H_2|_{V_2}$, with $p_1 \in U_1$ and $p_2 \in U_2$. Then $(p_1,p_2) \in U_1 \times U_2$ and
  \begin{equation*}
    \phi_1 \times \phi_2 : (\G_1\times\G_2)|_{U_1 \times U_2} \to (\H_1 \times \H_2)|_{V_1 \times V_2}
  \end{equation*}
  is an isomorphism, which proves the lemma.
\end{proof}

Subsection \ref{sub:Gluing groupoids} introduced the gluing construction of a family of groupoids. We show that gluing groupoids preserves their local structure.
\begin{lm} \label{lm:local_iso_gluing}
  Let $(U_i)_{i\in I}$ be an open cover of a topological space $X$, and let $(\G_i\dr U_i)_{i \in I}$ be a family of topological groupoids satisfying the weak gluing condition. Let $i \in I$ and $p \in U_i$, and assume that there is a groupoid $\H$ such that $\G_i \sim_p \H$. Then
  \begin{equation*}
    \bigcup_{i\in I} \G_i \sim_p \H
  \end{equation*}
\end{lm}
\begin{proof}
  Let $\G = \bigcup_{i\in I} \G_i$ be the glued groupoid. By definition, we have $\G|_{U_i} \simeq \G_i$; hence any local isomorphism $\phi : \G_i|_U \to \H|_V$ around $p$ induces a local isomorphism $\G|_{U_i\cap U} \simeq \H|_V$ around $p$.
\end{proof}

\begin{thm} \label{thm:local_iso_Fredholm}
  Let $\G\dr X$ be a locally compact, second-countable and locally Hausdorff groupoid. Assume that
 \begin{enumerate}
   \item there is an open dense $\G$-invariant subset $V \subset X$ with $\G_V \simeq V\times V$, and
   \item for each $p \in X$, there is a Fredholm groupoid $\H_p$ such that $\G \sim_p \H_p$. 
 \end{enumerate} 
  Then $\G$ is a Fredholm groupoid.
\end{thm}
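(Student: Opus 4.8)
The plan is to deduce the result directly from Theorem~\ref{thm:Fredholm_is_local}, by converting the given local isomorphisms into an open cover of $X$ all of whose members yield Fredholm reductions of $\G$. First I would fix, for each $p \in X$, a local isomorphism witnessing $\G \sim_p \H_p$: an open set $U_p \ni p$, an open subset $O_p$ of the unit space of $\H_p$, and an isomorphism of topological groupoids $\phi_p : \G|_{U_p} \to \H_p|_{O_p}$. Since $\H_p$ is a Fredholm groupoid and $O_p$ is open, Lemma~\ref{lm:Fredholm_goes_to_reductions} shows that the reduction $\H_p|_{O_p}$ is itself a Fredholm groupoid.

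The next step is to transport Fredholmness across $\phi_p$. An isomorphism of topological groupoids pushes the Haar system of $\H_p|_{O_p}$ forward to one on $\G|_{U_p}$ and induces a $*$-isomorphism $C^*_r(\G|_{U_p}) \simeq C^*_r(\H_p|_{O_p})$ which carries the vector representation $\pi_0$ and each regular representation $\pi_x$ to their counterparts, and sends the dense pair-groupoid orbit of one to that of the other. Each of the three conditions of Definition~\ref{defn:fredholm_groupoid} is therefore preserved, so $\G|_{U_p}$ is a Fredholm groupoid. The point deserving care is that this uses the Haar system pushed forward from $\H_p$, whereas Theorem~\ref{thm:Fredholm_is_local} refers to the reductions of $\G$ equipped with the Haar system restricted from $\G$; these two Haar systems on $\G|_{U_p}$ may differ, but any two Haar systems on a fixed groupoid yield isomorphic reduced $C^*$-algebras with matching regular representations, so the Fredholm property is insensitive to the choice.

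Finally, the family $(U_p)_{p \in X}$ is an open cover of $X$ because $p \in U_p$ for each $p$, and since $U_p \subset \G\cdot U_p$ the saturations $(\G\cdot U_p)_{p \in X}$ cover $X$ as well. Together with hypothesis (1) — that $\G$ admits an open dense $\G$-invariant set $V$ with $\G_V \simeq V \times V$ — this is exactly what is required to apply Theorem~\ref{thm:Fredholm_is_local} to the cover $(U_p)_{p \in X}$. Since every reduction $\G|_{U_p}$ is Fredholm, that theorem yields that $\G$ is a Fredholm groupoid.

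I expect the only genuine obstacle to be the Haar-system bookkeeping in the transport step: making precise that Fredholmness passes along the groupoid isomorphism $\phi_p$ and does not depend on which of the two competing Haar systems one puts on $\G|_{U_p}$. Everything else is a formal chaining of Lemma~\ref{lm:Fredholm_goes_to_reductions} and Theorem~\ref{thm:Fredholm_is_local}.
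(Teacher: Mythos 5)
Your proposal is correct and follows essentially the same route as the paper: apply Lemma~\ref{lm:Fredholm_goes_to_reductions} to each $\H_p$, transport Fredholmness across the local isomorphism, and conclude with Theorem~\ref{thm:Fredholm_is_local} applied to the cover $(U_p)_{p\in X}$. Your extra care about the Haar-system bookkeeping is a point the paper's proof silently glosses over, so flagging it is a welcome refinement rather than a deviation.
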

The point of Theorem \ref{thm:local_iso_Fredholm} is to emphasize again that only the local structure is important to characterize Fredholm groupoids.
\begin{proof}
  Following the assumptions, there is for each $p \in X$ an open set $U_p$ containing $p$ and such that $\G|_{U_p}$ is isomorphic to a reduction $\H_p|_{V_p}$, with $\H_p$ a Fredholm groupoid. Lemma \ref{lm:Fredholm_goes_to_reductions} implies that $\H_p|_{V_p}$ is Fredholm, so $\G|_{U_p}$ is also Fredholm. The conclusion follows from Theorem \ref{thm:Fredholm_is_local} applied to the open cover $(U_p)_{p \in X}$.
\end{proof}

\subsubsection{Local action groupoids}
\label{sub:Local_action_groupoids}
Many Fredholm groupoids occuring in the study of differential equation on singular spaces are very simple on a local scale: they are locally isomorphic to action groupoids. To formalize this, we introduce here the class of \emph{local action groupoids}. Many examples of such groupoids will be found in Subsection \ref{sub:Examples} below.

\begin{rem}
Recall that, if $G$ is a group acting on a space $X$ on the right, then the corresponding \emph{action groupoid} (or transformation groupoid) is written $X \rtimes G$ and defined as follows. As a set, we put $X\rtimes G \eqdef X \times G$. The domain and range maps are given by
\begin{equation*}
  d(x,g) = x\cdot g^{-1} \quad \text{and} \quad r(x,g) = x,
\end{equation*}
whereas the product is $(x,g)(x\cdot g^{-1},h) = (x,hg)$ (see \cite{Ren1980,Com2018} for more details). 

If $G$ and $X$ are both locally compact, second-countable and Hausdorff, and if moreover the action is continuous, then $X \rtimes G$ is a locally compact, second-countable, Hausdorff groupoid. The groupoid $X \rtimes G$ is endowed with a natural Haar system (induced by the Haar measure on $G$), and the reduced groupoid $C^*$-algebra $C^*_r(X \rtimes G)$ is then isomorphic to the crossed-product algebra $C_0(X)\rtimes_r G$. 
\end{rem}

\begin{thm} \label{thm:action_groupoid_Fredholm}
  Let $G$ be a topological group acting continuously on a space $X$. Assume that $G$ and $X$ are locally compact, Hausdorff and second-countable. Assume moreover that:
  \begin{enumerate}
    \item there is an open, dense $G$-orbit $V \subset X$ such that the action of $G$ on $V$ is free, transitive and proper, 
    \item the group $G$ is amenable.
  \end{enumerate}
  Then the groupoid $X \rtimes G$ is Fredholm.
\end{thm}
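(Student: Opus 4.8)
The plan is to reduce the statement to the sufficient condition already established in Theorem~\ref{thm:amenable_implies_Fredholm}. Set $\G \eqdef X \rtimes G$. By the Remark preceding the statement, the continuity of the action together with the local compactness, second-countability and Hausdorffness of $X$ and $G$ guarantee that $\G$ is a locally compact, second-countable, Hausdorff groupoid equipped with a natural right-invariant Haar system (coming from a Haar measure on $G$). Thus $\G$ already meets the standing hypotheses of Theorem~\ref{thm:amenable_implies_Fredholm}, and it remains to verify its two substantial assumptions: that the dense orbit $V$ satisfies $\G_V \simeq V \times V$, and that $\G_F$ is metrically amenable for $F \eqdef X \setminus V$.

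First I would treat the dense orbit. Since $V$ is a $G$-orbit it is automatically $\G$-invariant, and it is open and dense by assumption~(1). Because $V$ is invariant, the reduction $\G_V = \G|_V$ coincides with $V \rtimes G$, and I claim the anchor map
\begin{equation*}
  \Psi : V \rtimes G \to V \times V, \qquad (x,g) \mapsto (r(x,g), d(x,g)) = (x, x\cdot g^{-1}),
\end{equation*}
is an isomorphism onto the pair groupoid. A direct computation shows that $\Psi$ is a groupoid homomorphism; freeness of the action on $V$ gives injectivity, and transitivity gives surjectivity. The one genuinely topological point is that $\Psi^{-1}$ is continuous: here I would invoke properness of the action, which makes $\Psi$ a continuous proper bijection between locally compact Hausdorff spaces, hence a homeomorphism. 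This yields $\G_V \simeq V\times V$ and disposes of condition~(1) of Theorem~\ref{thm:amenable_implies_Fredholm}.

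Next, $F = X\setminus V$ is closed and $\G$-invariant, so $\G_F = \G|_F$ is exactly the action groupoid $F \rtimes G$ for the restricted action. Since $G$ is amenable, the full and reduced crossed products agree, $C_0(F)\rtimes G = C_0(F)\rtimes_r G$; equivalently, the canonical morphism $C^*(\G_F) \to C^*_r(\G_F)$ is an isomorphism, which is precisely metric amenability of $\G_F$. With $\G$ Hausdorff and $\G_F$ metrically amenable, Theorem~\ref{thm:amenable_implies_Fredholm} applies and shows that $X \rtimes G$ is Fredholm. I expect the main obstacle to be the topological half of the isomorphism $\Psi$, namely verifying that properness upgrades the algebraic bijection to a homeomorphism; the amenability step is standard once the identification $\G_F \simeq F\rtimes G$ is in place.
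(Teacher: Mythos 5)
Your proposal is correct and follows essentially the same route as the paper: both identify $\G_V$ with $V\times V$ via the anchor map $(x,g)\mapsto(x,x\cdot g^{-1})$, using properness to upgrade the continuous bijection to a homeomorphism, and both deduce metric amenability of $\G_F = F\rtimes G$ from the amenability of $G$ before invoking Theorem~\ref{thm:amenable_implies_Fredholm}. No gaps.
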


Action groupoids of the sort occur naturally in the study of the essential spectrum of differential operators on groups, a notable example being that of the quantum $N$-body problem on Euclidean space \cite{GI2006,MNP2017,MPR2007}. 

\begin{proof}
  Let $\G = X \rtimes G$. Firstly, the assumptions on the action of $G$ on $V$ imply that the map
  \begin{align*}
    \alpha: V \rtimes G & \to V \times V \\
    (x,g) & \mapsto (x,x\cdot g^{-1})
  \end{align*}
  is continuous and bijective. Moreover $\alpha$ is proper with value in a Hausdorff space, hence closed. Therefore $\alpha$ is an homeomorphism, which shows that $\G_V \simeq V\times V$ Secondly, the amenability of $G$ implies that the groupoid $\G_F = F \rtimes G$ is metrically amenable \cite{Wil2007}, where $F = X \setminus G$. The result follows from Theorem \ref{thm:amenable_implies_Fredholm}.
\end{proof}

\begin{defn} \label{defn:local_action_groupoids}
  A locally compact and second countable groupoid $\G \dr X$ is said to be a \emph{local action groupoid} if, for each $p \in X$, there is an action groupoid $X_p \rtimes G_p$ such that $\G$ is locally isomorphic to $X_p \rtimes G_p$ near $p$, where $G_p$ and $X_p$ are both locally compact, Hausdorff and second countable.
\end{defn}

The main point of Definition \ref{defn:local_action_groupoids} is that the local structure of a such groupoids is very well understood: indeed, the $C^*$-algebras of action groupoids and their representations have been much studied in the litterature \cite{Wil2007}. Several examples of local action groupoids shall be given in Section \ref{sec:Applications to differential equations}. 

\begin{prop} \label{prop:local_action_product}
  Let $\G$ and $\H$ be local action groupoids. Then $\G \times \H$ is also a local action groupoid.
\end{prop}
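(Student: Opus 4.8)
The plan is to reduce the claim about $\G \times \H$ to the corresponding local statements for $\G$ and $\H$ separately, using the product structure of local isomorphisms already established in Lemma \ref{lm:local_iso_product}. The key observation is that being a local action groupoid is a purely local condition: it only requires, around each point, a local isomorphism to some action groupoid. Since the direct product of two action groupoids is again an action groupoid, the product of the local models should furnish the required local model for $\G \times \H$.

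First I would fix an arbitrary point $(p,q)$ in the unit space $X \times Y$ of $\G \times \H$, where $X$ and $Y$ denote the unit spaces of $\G$ and $\H$. By Definition \ref{defn:local_action_groupoids}, there are action groupoids $X_p \rtimes G_p$ and $Y_q \rtimes H_q$, with $G_p, X_p, H_q, Y_q$ all locally compact, Hausdorff and second countable, together with local isomorphisms $\G \sim_p (X_p \rtimes G_p)$ and $\H \sim_q (Y_q \rtimes H_q)$. Applying Lemma \ref{lm:local_iso_product} with $\H_1 = X_p \rtimes G_p$ and $\H_2 = Y_q \rtimes H_q$ immediately gives a local isomorphism
\begin{equation*}
  \G \times \H \sim_{(p,q)} (X_p \rtimes G_p) \times (Y_q \rtimes H_q).
\end{equation*}

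The remaining step is to identify the target $(X_p \rtimes G_p) \times (Y_q \rtimes H_q)$ as an action groupoid. Here I would check, directly from the definitions in the Remark preceding Theorem \ref{thm:action_groupoid_Fredholm}, that there is a canonical isomorphism of topological groupoids
\begin{equation*}
  (X_p \rtimes G_p) \times (Y_q \rtimes H_q) \simeq (X_p \times Y_q) \rtimes (G_p \times H_q),
\end{equation*}
where $G_p \times H_q$ acts componentwise on $X_p \times Y_q$. This is a straightforward verification: both groupoids have underlying set $(X_p \times Y_q) \times (G_p \times H_q)$, and one matches up the domain, range and product maps using that the structural maps of a direct product are the direct products of the structural maps. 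Since $G_p \times H_q$ and $X_p \times Y_q$ are again locally compact, Hausdorff and second countable (as finite products of such spaces), the composite local isomorphism exhibits $\G \times \H$ as locally isomorphic to an action groupoid near $(p,q)$. As $(p,q)$ was arbitrary, $\G \times \H$ is a local action groupoid.

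I do not anticipate a genuine obstacle here, as the result is essentially formal once Lemma \ref{lm:local_iso_product} is in hand; the only point requiring any care is the identification of a product of action groupoids with a single action groupoid for the product group action, which is routine but should be stated explicitly to keep the argument self-contained.
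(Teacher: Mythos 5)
Your proposal is correct and follows exactly the paper's own argument: apply Lemma \ref{lm:local_iso_product} to the local models at $p$ and $q$, then identify the product of two action groupoids with the action groupoid of the product action. The only difference is that you spell out the routine verification that the paper leaves implicit.
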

\begin{proof}
  This follows from Lemma \ref{lm:local_iso_product} and the fact that, if $\G_1 = X_1\rtimes G_1$ and $\G_2 = X_2\rtimes G_2$ are action groupoids, then 
  \begin{equation*}
    \G_1 \times \G_2 \simeq (X_1 \times X_2) \rtimes (G_1 \times G_2),
  \end{equation*}
  where $G_1\times G_2$ acts on $X_1 \times X_2$ by the product action.
\end{proof}

\begin{prop} \label{prop:local_action_gluing}
  Let $(\G_i)_{i\in I}$ be a family of local action groupoids satisfying the weak gluing condition of Subsection \ref{sub:Gluing groupoids}. Then the glued groupoid $\G = \bigcup_{i \in I} \G_i$ is also a local action groupoid.
\end{prop}
\begin{proof}
  This is a direct consequence of Lemma \ref{lm:local_iso_gluing}.
\end{proof}

Since \enquote{the Fredholm property is local}, it is only natural that Theorem \ref{thm:action_groupoid_Fredholm} generalizes to local action groupoids.

\begin{thm} \label{thm:local_action_groupoids_are_Fredholm}
  Let $\G \dr X$ be a local action groupoid. Assume that
  \begin{enumerate}
    \item there is an open, dense and $\G$-invariant subset $V \subset X$ such that $\G_V \simeq V \times V$; 
    \item for each $p \in X$, there is a local isomorphism $\G \sim_p X_p \rtimes G_p$, with $X_p$, $G_p$ locally compact, second countable and Hausdorff, and $G_p$ \emph{amenable}.
  \end{enumerate}
  Then $\G$ is a Fredholm groupoid.
\end{thm}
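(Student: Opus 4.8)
The plan is to deduce the result from Theorem \ref{thm:Fredholm_is_local} by exhibiting an open cover all of whose members have Fredholm reductions. For each $p \in X$, the local isomorphism $\G \sim_p X_p \rtimes G_p$ furnishes an open set $U_p \ni p$ and a groupoid isomorphism $\phi_p : \G|_{U_p} \to (X_p \rtimes G_p)|_{W_p}$ onto a reduction of the action groupoid. Since $p \in U_p$ for every $p$, the saturations $(\G\cdot U_p)_{p\in X}$ trivially cover $X$; combined with hypothesis (1), the hypotheses of Theorem \ref{thm:Fredholm_is_local} will be satisfied as soon as each $\G|_{U_p}$ is known to be Fredholm. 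Because the Fredholm property is invariant under isomorphism, it is enough to prove that $\H_p \eqdef (X_p \rtimes G_p)|_{W_p}$ is a Fredholm groupoid. I avoid routing through Theorem \ref{thm:local_iso_Fredholm}, since that would require the full groupoid $X_p\rtimes G_p$ to be Fredholm, which the hypotheses do not grant.

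First I would transport the global dense orbit through $\phi_p$. As $V$ is open, dense and $\G$-invariant with $\G_V \simeq V\times V$, the set $V \cap U_p$ is open, dense and $\G|_{U_p}$-invariant in $U_p$, and $\G|_{V\cap U_p} = (\G|_V)|_{V\cap U_p} \simeq (V\cap U_p)\times(V\cap U_p)$, a reduction of a pair groupoid being again a pair groupoid. Its image $V_p \subset W_p$ under the homeomorphism of unit spaces induced by $\phi_p$ is then open, dense and $\H_p$-invariant, with $(\H_p)_{V_p} \simeq V_p \times V_p$. Thus $\H_p$ carries precisely the dense orbit structure demanded by the first hypothesis of Theorem \ref{thm:amenable_implies_Fredholm}, with $F_p \eqdef W_p \setminus V_p$ in the role of the singular set.

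The core step is then to apply Theorem \ref{thm:amenable_implies_Fredholm} to $\H_p$. The action groupoid $X_p\rtimes G_p = X_p \times G_p$ is Hausdorff, so its open reduction $\H_p$ is Hausdorff, locally compact and second-countable, and it inherits the Haar system induced by the Haar measure of $G_p$. It remains to verify that $(\H_p)_{F_p}$ is metrically amenable. Since $F_p$ is $\H_p$-invariant we have $(\H_p)_{F_p} = \H_p|_{F_p} = (X_p\rtimes G_p)|_{F_p}$, i.e.\ a reduction of $X_p \rtimes G_p$. The amenability of $G_p$ makes $X_p \rtimes G_p$ an amenable groupoid \cite{Wil2007}, and amenability is inherited when one passes to a reduction; hence $(X_p\rtimes G_p)|_{F_p}$ is amenable, and in particular metrically amenable. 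Theorem \ref{thm:amenable_implies_Fredholm} then yields that $\H_p$ is Fredholm, so $\G|_{U_p}$ is Fredholm, and Theorem \ref{thm:Fredholm_is_local} concludes that $\G$ is Fredholm.

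The step I expect to be the main obstacle is the metric amenability of $(\H_p)_{F_p}$, because $F_p$ need not be $G_p$-invariant in $X_p$, so this groupoid is genuinely a reduction of an action groupoid rather than an action groupoid itself; one cannot simply write it as $F_p \rtimes G_p$. The argument therefore hinges on the permanence of (metric) amenability under passage to reductions, which must be invoked with care. I would also make sure that the Haar system carried by the open reduction $\H_p$ is the expected restricted one, so that the regular representations appearing implicitly through Theorem \ref{thm:amenable_implies_Fredholm} agree with those used in Definition \ref{defn:fredholm_groupoid}.
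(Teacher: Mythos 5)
Your proof is correct and follows essentially the same route as the paper's: transport the dense orbit $V$ through the local isomorphism, show the reduction $(X_p \rtimes G_p)|_{W_p}$ is Hausdorff with amenable complement of the dense orbit, apply Theorem \ref{thm:amenable_implies_Fredholm} to it, and conclude with the locality theorem (the paper phrases this last step via Theorem \ref{thm:local_iso_Fredholm}, taking $\H_p$ to be the reduced action groupoid itself, which is the same thing). Your explicit attention to the fact that $(\H_p)_{F_p}$ is only a reduction of an action groupoid, so that one must invoke permanence of amenability under reductions, is a point the paper glosses over but handles identically in substance.
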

In other words, if $\G$ is locally given by the action of an \emph{amenable} group, then $\G$ is Fredholm. Section \ref{sec:Applications to differential equations} will provide many examples of such groupoids.

\begin{proof}
  Set $p \in X$, and consider the local isomorphism $\phi_p : \G|_{U_p} \simeq (X_p \rtimes G_p)|_{U_p'}$, where $p \in U_p$. Let $V_p = V \cap U_p$ and $V'_p = \phi_p(V_p)$. Because $\G_V \simeq V\times V$, we have $\G|_{V_p} \simeq V_p \times V_p$, hence
  \begin{equation*}
    (X_p\rtimes G_p)|_{V'_p} \simeq V'_p \times V'_p.
  \end{equation*}

  Now, because $G_i$ is amenable, the groupoid $X_p \rtimes G_p$ is Hausdorff and amenable. It follows that its reduction $(X_p \rtimes G_p)|_{U'_p}$ is also Hausdorff and amenable. Theorem \ref{thm:amenable_implies_Fredholm} therefore implies that each groupoid $(X_p \rtimes G_p)|_{U_p}$ is Fredholm. We conclude using Theorem \ref{thm:local_iso_Fredholm} that $\G$ is a Fredholm groupoid.
\end{proof}

\section{Examples and Applications}
\label{sec:Applications to differential equations}

We conclude this paper with many examples of Fredholm and local action groupoids. All our examples are motivated by the study of differential equations on singular spaces, so we begin in Subsection \ref{sub:Differential operators} by discussing the notion of differential operators induced by a Lie groupoid.

\subsection{Differential operators}
\label{sub:Differential operators}

Many Fredholm groupoids that are studied in practical cases are Lie groupoids, i.e.\ groupoids with a smooth structure. We work in the setting of manifolds with corners, in other words manifolds which are modelled on open subsets of the cube $[0;1]^n$, for $n \in \N$ (see \cite{Joy2012} for more on that matter). A Lie groupoid in that context is defined as follows.

\begin{defn} \label{defn:lie_groupoids}
  A groupoid $\G\dr X$ is a \emph{Lie groupoid} if
  \begin{enumerate}
    \item both $\G$ and $X$ are manifolds with corners, with $X$ Hausdorff,
    \item the structural maps $d,r,\iota$ and $u$ are smooth,
    \item the map $d$ is a \emph{tame} submersion, and
    \item the composition map $\mu$ is smooth.
  \end{enumerate}
\end{defn}
A submersion $h : X\to Y$ of manifolds with corners is said to be \emph{tame} if, for all $v \in TX$, the vector $h_*(v) \in TY$ is inward-poiting if and only $v$ is. If $\G$ is a Lie groupoid with units $X$, the tameness condition ensures that the fibers $\G_x = d^{-1}(x)$, for $x \in X$, are smooth manifolds without corners \cite{Joy2012}. Note that Definition \ref{defn:lie_groupoids} does not require $\G$ to be Hausdorff; however, because $\G$ is a manifold, it is always locally Hausdorff.

The \emph{Lie algebroid} of a Lie groupoid $\G\dr X$ is the bundle $A\G \to X$ of all vectors tangent to the $d$-fibers $\G_x$, for $x \in X$; in other words 
\begin{equation*}
  A\G = \bigcup_{x \in X} T_x\G_x = (\ker d_*)|_X.
\end{equation*}
The differential of the range $r : \G \to X$ gives the \emph{anchor map} $r_* : A\G \to TX$. \cite{Mac1987}

Assume now that $X$ is \emph{compact} and that there is an open, dense and $\G$-invariant subset $V \subset X$ such that $\G_V \simeq V\times V$. Then $r_*$ is an isomorphism from $A\G|_V$ to $TV$. Thus any metric $g$ on $A\G$ induces a Riemannian metric $g_0$ on $V$, which we call \emph{compatible} with $A\G$. Such metrics $g_0$ are always complete, and their equivalence class depends on $A\G$ only \cite{ALN2004}. Associated to $g_0$ is a well-defined scale of \emph{Sobolev spaces} $(H^s(V))_{s \in \R}$, which all contain $C_c^\infty(V)$ as a dense subset \cite{HR2008}. Naturally $H^0(V) = L^2(V)$.

\begin{defn} \label{defn:differential_operators}
  A \emph{differential operator} $P$ on a Lie groupoid $\G$ with compact units $X$ is a family $(P_x)_{x\in X}$ such that
  \begin{enumerate}
    \item for any $x \in X$, the element $P_x$ is a differential operator on $\G_x = d^{-1}(x)$,
    \item (right-invariance) for any $g \in \G$, the right multiplication $R_g : \G_{r(g)} \to \G_{d(g)}$ gives a conjugation $R_{g^{-1}}^* P_{r(g)} R_g^* = P_{d(g)}$, and
    \item (smoothness) for any $f \in C^\infty(\G)$, the map $x \mapsto P_x f_x$ is smooth, where $f_x = f|_{\G_x}$.
  \end{enumerate}
\end{defn}
We let $\Diff(\G)$ be the algebra of all differential operators on $\G$, and $\Diff^m(\G)$ the subspace of operators of order lesser or equal to $m\in\N$. Operators of order $1$ are just right-invariant vector fields on $\bigcup_{x\in X}T\G_x$, which are in one-to-one correspondence with $\Gamma(A\G)$. Thus the algebra $\Diff(\G)$ may be alternatively described as the universal envelopping algebra of the Lie algebroid $A\G$ \cite{NWX1999}.

The anchor map $r_* : \Gamma(A\G) \to \Gamma(TX)$, whose image can be restricted to $U$, induces an injective algebra morphism
\begin{equation*}
  \pi_0 : \Diff(\G) \to \Diff(V),
\end{equation*}
whose image we denote $\Diff_\G(V)$ (correspondingly, we write $\Diff^m_\G(V)$ for the image of $\Diff^m(\G)$ through $\pi_0$). Operators in $\Diff^1_\G(V)$ (that is, sections of $A\G$) should really be thought of as the \enquote{infinitesimal action} of $\G$ on its dense orbit $V$. 

Two important properties of $\Diff_\G(V)$ where shown in \cite{ALN2004}. First, the algebra $\Diff_\G(V)$ contains every geometric operator associated to the compatible metric $g_0$ (such as the Laplacian and any generalized Dirac operator). Secondly, any differential operator $P \in \Diff^m_\G(V)$ induces a bounded operator $P : H^s(V) \to H^{s-m}(V)$, for any $s \in \R$.

The main motivation for introducting and studying Fredholm groupoids is to obtain Fredholm conditions for the operators in $\Diff_\G(V)$. This is illustrated by the following theorem.

\begin{thm}[\citeauthor{CNQ2017} \cite{CNQ2017}] \label{thm:fredholm_condition_for_operators}
  Let $\G \dr X$ be a Fredholm Lie groupoid with compact unit space $X$, and set $V \subset X$ its unique dense, open $\G$-orbit. Let $P \in \Diff^m_\G(V)$ and set $s \in \R$. Then $P :H^s(V) \to H^{s-m}(V)$ is Fredholm if, and only if,
  \begin{enumerate}
    \item $P$ is elliptic and
    \item for any $x \in X\setminus V$, the operator $P_x : H^s(\G_x) \to H^{s-m}(\G_x)$ is invertible.
  \end{enumerate}
\end{thm}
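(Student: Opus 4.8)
Since this statement is essentially \cite[Theorem 4.3]{CNQ2017}, the plan is to recast its proof in the $C^*$-algebraic language developed above, the point being to reduce the Fredholmness of $P$ on the Sobolev scale to Condition \eqref{it:def1_bdry} of Definition \ref{defn:fredholm_groupoid} through the pseudodifferential calculus $\Psi^\infty(\G)$ of \cite{NWX1999}. First I would reduce to order $0$: choosing an invertible, elliptic order-reduction operator $\Lambda \in \Psi^1(\G)$ (for instance $(1+\Delta_\G)^{1/2}$ for a groupoid Laplacian $\Delta_\G \in \Diff^2(\G)$), the maps $\Lambda^t : H^{\sigma}(V) \to H^{\sigma-t}(V)$ are isomorphisms for all $\sigma,t$, and likewise fiberwise on each $H^\sigma(\G_x)$. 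Conjugating, $\widetilde P \eqdef \Lambda^{s-m} P \Lambda^{-s}$ lies in $\Psi^0(\G)$, is elliptic if and only if $P$ is, and $\widetilde P : L^2(V) \to L^2(V)$ is Fredholm if and only if $P : H^s(V) \to H^{s-m}(V)$ is; the same equivalence holds fiberwise between $\widetilde P_x$ on $L^2(\G_x)$ and $P_x$ on $H^s(\G_x) \to H^{s-m}(\G_x)$.

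Next I would invoke elliptic theory in the calculus. The principal symbol yields, after $L^2$-closure, an exact sequence $0 \to C^*_r(\G) \to \overline{\Psi^0(\G)} \xrightarrow{\sigma_0} C(S^*(A\G)) \to 0$, with $\Psi^{-1}(\G)$ mapping into $C^*_r(\G)$. Ellipticity of $\widetilde P$ means that $\sigma_0(\widetilde P)$ is invertible, equivalently that there is a parametrix $Q \in \Psi^0(\G)$ with $Q\widetilde P - 1,\ \widetilde P Q - 1 \in C^*_r(\G)$; this is exactly what is needed to present $\widetilde P$ as invertible modulo $C^*_r(\G)$. Since $\G_V \simeq V\times V$, the vector representation identifies $\pi_0(C^*_r(\G_V)) = \K(L^2(V))$, and $\pi_0$ is injective by Condition \eqref{it:pi_0_inj} of Definition \ref{defn:fredholm_groupoid}. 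Consequently $\widetilde P$ is Fredholm on $L^2(V)$ if and only if its class is invertible in $\overline{\pi_0(\Psi^0(\G))}/\K(L^2(V))$, and by the parametrix this amounts to the joint invertibility of $\sigma_0(\widetilde P)$ (ellipticity) and of the image of $\widetilde P$ in $C^*_r(\G)/C^*_r(\G_V) \simeq C^*_r(\G_F)$, the two obstructions being independent.

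Finally I would feed in the Fredholm groupoid hypothesis. By Proposition \ref{prop:Fredholm_second_def}, with $F = X \setminus V$, the family $(\pi_x)_{x \in F}$ is exhaustive for the quotient $C^*_r(\G)/C^*_r(\G_V) \simeq C^*_r(\G_F)$; hence an element there is invertible if and only if each $\pi_x$, $x \in F$, sends it to an invertible operator. Applied to the class of $\widetilde P$, this shows that the boundary obstruction vanishes exactly when $\widetilde P_x$ is invertible on $L^2(\G_x)$ for every $x \in F$. Undoing the order reduction fiberwise turns this into the invertibility of $P_x : H^s(\G_x) \to H^{s-m}(\G_x)$, which is condition (2) of the theorem, while the symbol obstruction is condition (1).

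The main obstacle is not the $C^*$-algebraic bookkeeping, which is handled cleanly by Definition \ref{defn:fredholm_groupoid} and Proposition \ref{prop:Fredholm_second_def}, but the analytic input from the pseudodifferential calculus on a merely locally Hausdorff Lie groupoid: one must establish the closed symbol exact sequence, the mapping properties of $\Psi^m(\G)$ on the Sobolev scale $(H^s(V))_{s\in\R}$, and the compatibility of the order-reduction isomorphisms $\Lambda^t$ with every regular representation $\pi_x$. These facts are precisely the content of \cite{NWX1999, ALN2004}, so in the present setting I would quote them rather than reprove them, the genuinely new ingredient being only the packaging of the boundary condition through the exhaustive family $(\pi_x)_{x \in F}$.
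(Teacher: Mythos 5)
The paper itself offers no proof of this theorem: it is imported verbatim from \cite{CNQ2017}, and your proposal is essentially a faithful reconstruction of the argument given there (order reduction via $\Lambda^t$, the symbol exact sequence for $\overline{\Psi^0(\G)}$, Atkinson's theorem with $\K(L^2(V)) = \pi_0(C^*_r(\G_V))$, and the boundary condition packaged through the exhaustive family $(\pi_x)_{x\in X\setminus V}$ of Proposition \ref{prop:Fredholm_second_def}). One step you should tighten: $\widetilde P$ does not lie in $C^*_r(\G)$, so the boundary obstruction is not the invertibility of an \enquote{image of $\widetilde P$ in $C^*_r(\G)/C^*_r(\G_V)$} but the invertibility of the class of $\widetilde P$ in $\overline{\Psi^0(\G)}/C^*_r(\G_V)$, which one tests against the family $\{\sigma_0\}\cup(\pi_x)_{x\in X\setminus V}$ after checking that this family is exhaustive for that larger quotient --- a short additional argument combining the exhaustive family on the ideal $C^*_r(\G)/C^*_r(\G_V)$ with the symbol representation on the quotient $C(S^*(A\G))$, which is exactly how \cite{CNQ2017} and \cite{NP2017} close the loop.
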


Recall that a differential operator $P$ on $V$ is called \emph{elliptic} if its principal symbol $\sigma(P) \in C^\infty(T^*M)$ vanishes only on the zero-section \cite{Hor1985}. The operators $P_x$, for $x \in X\setminus V$, are called \emph{limit operators} for $P$: they are invariant under the action of $\G$ on $\G_x$, and are of the same type as $P$ (e.g.\ if $P$ is the Laplacian on $V$, then $P_x$ is the Laplacian on $\G_x$). Note that Theorem \ref{thm:fredholm_condition_for_operators} remains true if we condider pseudodifferential operators on $\G$ or operators acting between vector bundles sections \cite{CNQ2017}. Many similar results were known in particular cases, see \cite{DLR2015,Sch1991,Dau1988,Kon1967,KMR1997, GI2006, MPR2007,Man2002} and the reference therein. 

\begin{rem}
  For many practical applications, it may be interesting to extend the definition of differential operators (Definition \ref{defn:differential_operators}) and the corresponding Fredholm and boundedness results to the setting of groupoids that are only longitudinally smooth: see \cite{LMN2000,Pat2001}.
\end{rem}

\subsection{Examples}
\label{sub:Examples}
We conclude this paper with many examples of groupoids occuring in practical applications, and show that these groupoids are all local action groupoids, and in particular Fredholm.

\subsubsection{The pair groupoid}
\label{ssub:The pair groupoid}
Let $M$ be a closed manifold, i.e.\ a compact smooth manifold without boundary. The pair groupoid $\G = M\times M$ is then Fredholm: indeed, the vector representation
\begin{equation*}
  \pi_0 : C^*_r(M\times M) \to \B(L^2(M))
\end{equation*}
is an isomorphism onto the ideal $\K$ of compact operators on $L^2(M)$ \cite{CNQ2017}. Thus the operators $1 + \pi_0(a)$, for $a \in C^*_r(\G)$, are all Fredholm. Assumption \eqref{it:def1_bdry} of Definition \ref{defn:fredholm_groupoid} is trivially satisfied in that case, because the boundary set $M \setminus M$ is empty.

Here the algebra $\Diff_\G(M)$ contains \emph{all} differential operators on $M$. Theorem \ref{thm:fredholm_condition_for_operators} then recovers the classical Fredolmness result: a differential operator on $M$ is Fredholm if, and only if, it is elliptic.
The groupoid $\G$ is a local action groupoid. Indeed, any point $p \in M$ has a neighborhood $U \subset M$ diffeomorphic to an open subset $U' \subset \R^n$. Then
\begin{equation*}
  \G|_U \simeq U'\times U' \simeq (\R^n \rtimes_\alpha \R^n)|_{U'},
\end{equation*}
where $\alpha$ is the action of $\R^n$ on itself by translation.

\subsubsection{Cylindrical ends}
\label{ssub:Cylindrical ends}
Consider the smooth action of $\R$ on $\R_+$ given by
\begin{align*}
  \alpha : \R\times\R_+ & \to \R_+ \\
  (t,x) & \mapsto e^t x.
\end{align*}
The fundamental vector fields for this action are the image of the sections of the Lie algebroid of $\R_+\rtimes\R$ through the anchor map; here they are generated by $x\d_x$. 

Let now $M$ be a manifold with boundary, and $M_0$ its interior. The $b$-groupoid over $M$, which will be denoted $\G_b$, can be obtained in several steps.
\begin{enumerate}
  \item First, let $\H \dr \R_+ \times \d M$ be the product groupoid
    \begin{equation*}
      \H = (\R_+\rtimes_\alpha\R)\times(\d M \times \d M).
    \end{equation*}
   The action of $\R$ on $\R_+$ is free and transitive on $\R_+^*$, so $\H|_{]0,1[\times\d M}$ is isomorphic to the pair groupoid of $]0,1[\times \d M$.
   \item Let $U \simeq [0,1[ \times \d M$ be a tubular neighborhood of $\d M$ in $M$. We now have an identification over the interior:
     \begin{equation*}
       \H|_{]0,1[\times\d M} \simeq (]0,1[\times\d M)^2 \simeq (M_0 \times M_0)|_{U\setminus\d M}.
     \end{equation*}
     So $\H$ and $M_0\times M_0$ may be glued into a groupoid $\G_b \dr M$ (refer to Subsection \ref{sub:Gluing groupoids} for more details concerning the gluing procedure).
\end{enumerate}

We call $\G_b$ the \emph{b-groupoid} of $M$. Any metric $g_0$ on $M_0$ that is compatible with $\G_b$ is bi-Lipschitz equivalent to the product metric
\begin{equation*}
  \frac{dx^2}{x^2} + h_{\d M}
\end{equation*}
on $U \simeq [0,1[\times\d M$, with $h_{\d M}$ a metric on $\d M$. Thus $\G_b$ models \emph{manifolds with cylindrical ends}. The algebra $\Diff_{\G_b}(M_0)$ is that of every differential operator $P$ on $M_0$ which can be written as
\begin{equation*}
  P = \sum_{|\alpha| \le m} a_\alpha (x\d_x)^{\alpha_1} \d_{y_2}^{\alpha_2}\ldots\d_{y_n}^{\alpha_n},
\end{equation*}

locally near any point of $\d M$, with $a_\alpha \in C^\infty(M)$ and $(\d_{y_i})_{i=2}^n$ a local basis of $\Gamma(T\d M)$. It contains in particular any geometric operator associated to the metric $g_0$. The algebra $\Diff_b(M_0)$ has been extensively studied, and is closely related to the $b$-calculus of Melrose and the Atiyah-Patodi-Singer index theorem of manifolds with boundaries \cite{Mel1993,APS1976}.

The groupoid $\G_b$ is obtained by gluing Hausdorff groupoids together, hence $\G_b$ is Hausdorff \cite{Com2018}. Moreover, the groupoid $\G_b$ is obtained by taking direct products and gluing together several local action groupoids: it follows from Propositions \ref{prop:local_action_product} and \ref{prop:local_action_gluing} that $\G_b$ is also a local action groupoid. The local structure is very simple: for any $p \in M$, we have a local isomorphism
\begin{equation*}
  \G \sim_p (\R_+\times\R^{n-1}) \rtimes (\R\times\R^{n-1})
\end{equation*}
The action is given by the product action of $\R^{n-1}$ on itself (by translation) and $\R$ on $\R_+$ (by $\alpha$). Since the acting groups is amenable, we conclude from Theorem \ref{thm:local_action_groupoids_are_Fredholm} that $\G_b$ is a Fredholm groupoid. This is by no mean a new result \cite{CNQ2017}, but should serve as a case in point to emphasizes the relevance of local action groupoids in practical cases.

\begin{rem}
  Carvalho and Qiao have recently considered a groupoid very close in nature to the $b$-groupoid and more suited to the study of boundary layer potential operators on manifolds with boundary \cite{CQ2018}. Because the construction is very similar, it is clear that their groupoid is also a local action groupoid (with the same local picture as for $\G_b$).
\end{rem}

\subsubsection{Cusp metrics}
\label{ssub:Cusp groupoid}
Example \ref{ssub:Cylindrical ends} can be generalized by replacing the action of $\R$ on $\R_+$ by a more general one. For example, let $\varphi$ be a function in $C^0(\R_+,\R_+)$, vanishing only at $0$ and such that
\begin{enumerate}
  \item $\varphi \in C^\infty(\R_+^*)$, and
  \item $\varphi'$ is bounded on $\R_+^*$.
\end{enumerate}
Let $\alpha : \R \times \R_+ \to \R_+$ be the flow associated to the continuous vector field $x \mapsto \varphi(x)\d_x$ on $\R_+$. The function $\varphi$ is globally Lipschitz, so this flow is well-defined for any time $t \in \R$. A typical example is any function $\varphi \in C^0(\R_+ \cap C^\infty(\R_+^*)$ satisfying
\begin{equation*}
  \left\{ \begin{array}{ll}
       \varphi(x) = x^r & \text{if $x \in [0,1]$, and} \\
       \varphi(x) = 1 & \text{if $x\ge 2$},
  \end{array}\right.
\end{equation*}
for any $r \in [1;+\infty)$. 

Under these assumptions, the open set $\R_+^*$ is an orbit of $\alpha$ on which the action is free and transitive, so we may construct a groupoid $\G_\varphi \dr M$ by following the same gluing procedure as in Example \ref{ssub:Cylindrical ends}.  The groupoid $\G_\varphi$ is a local action groupoid that is locally isomorphic to $(\R_+\times\R^{n-1}) \rtimes_\alpha (\R\times\R^{n-1})$, hence it is Fredholm by Theorem \ref{thm:local_action_groupoids_are_Fredholm}. 

The compatible metrics are bi-Lipschitz equivalent to the complete metric
\begin{equation*}
  g_0(x,p) = \frac{dx^2}{(\varphi(x))^2} + h_{\d M}
\end{equation*}
when $(x,p)$ is in a tubular neighborhood $[0,1[\times\d M$ of $\d M$, with $x > 0$. This models \emph{manifolds with cusps}, see e.g.\ \cite{RST2004,KMR1997,Dau1988}. The differential operators $P \in \Diff_{\G_\varphi}(M_0)$ can be written
\begin{equation*}
  P = \sum_{|\alpha|\le m} a_\alpha (\varphi(x)\d_x)^{\alpha_1}\d_{y_2}^{\alpha_2}\ldots\d_{y_n}^{\alpha_n},
\end{equation*}
locally near any point of $\d M$, with $a_\alpha \in C^\infty(M)$ and $(\d_{y_i})_{i=2}^n$ a local basis of $\Gamma(T\d M)$. However, if $\varphi$ is not smooth at $x=0$, then the groupoid $\G_\varphi$ is not a Lie groupoid, but only a continuous family groupoid \cite{Pat2000, LMN2000}. Obtaining Fredholm conditions for operators in $\Diff_{\G_\varphi}(M_0)$ therefore requires an extension of Theorem \ref{thm:fredholm_condition_for_operators} to the setting of continuous family groupoids.

\subsubsection{Scattering manifolds}
\label{ssub:Scattering groupoid}

Let $M$ be a compact manifold with boundary and interior $M_0$. A \emph{scattering metric} (with respect to $M$) is a Riemannian metric $g_0$ on $M_0$ such that, for a tubular neighborhood $U \simeq [0,1[ \times \d M$ of $\d M$ in $M$, we can write
\begin{equation*}
  g_0(x,p) = \frac{dx^2}{x^4} + \frac{h_{\d M}}{x^2},
\end{equation*}
for any $(x,p) \in ]0,1[\times \d M \simeq U\cap M_0$, and with $h_{\d M}$ a metric on $\d M$. A typical example is given by the stereographic compactification of $\R^n$ into the half-sphere $\S^n_+$: the euclidean metric on $\R^n$ is then a scattering metric for $\S^n_+$ \cite{Mel1995,Vas2001}. For this reason, manifolds with scattering metrics are also called \emph{asymptotically euclidean}.

The algebra of scattering differential operators, written $\Diff_{sc}(M_0)$, is the one containing all differential operators $P$ on $M_0$ that can be written
\begin{equation*}
  P = \sum_{|\alpha| \le m} a_\alpha (x^2\d_x)^{\alpha_1} (x\d_{y_2})^{\alpha_2}\ldots(x\d_{y_n})^{\alpha_n},
\end{equation*}
locally near any point of $\d M$, with $a_\alpha \in C^\infty(M)$ and $(\d_{y_i})_{i=2}^n$ a local basis of $\Gamma(T\d M)$. It contains in particular the Laplacian associated to $g_0$.

It was shown in \cite{CNQ2017,Com2018} that there is a groupoid $\G_{sc} \dr M$ such that $\Diff_\G(M_0) = \Diff_{sc}(M_0)$. Moreover, the groupoid $\G_{sc}$ can be constructed by gluing reductions of the action groupoid $\S^n_+ \rtimes \R^n$, where the smooth action of $\R^n$ on $\S^n_+$ is the only one extending the action of $\R^n$ on itself by translation. Thus $\G_{sc}$ is a local action groupoid that is locally isomorphic to $\S^n_+ \rtimes \R^n$ around any point. It follows that $\G_{sc}$ is Fredholm by Theorem \ref{thm:local_action_groupoids_are_Fredholm}. This groupoid and closely related ones were studied in \cite{Vas2001,MNP2017} for instance, in relation with the study of the spectrum of the $N$-body problem on Euclidean space.

\subsubsection{Asymptotically hyperbolic}
\label{ssub:Hyperbolic groupoid}

As in Example \ref{ssub:Scattering groupoid}, let $M$ be a compact manifold with boundary and $M_0$ its interior. An \emph{asymptotically hyperbolic metric} (with respect to $M$) is a Riemannian metric $g_0$ on $M_0$ that can be written
\begin{equation*}
  g_0(x,p) = \frac{dx^2 + h_{\d M}}{x^2} 
\end{equation*}
in a tubular neighborhood $[0,1[\times \d M$, for $x > 0$ (here $h_{\d M}$ is a metric on $\d M$, as before). A typical example is the hyperbolic space $\mathbb{H}^n$, with its usual metric, compactified into the Poincaré ball. The interesting operators are those that can be written
\begin{equation*}
  P = \sum_{|\alpha| \le m} a_\alpha (x\d_x)^{\alpha_1} (x\d_{y_2})^{\alpha_2}\ldots(x\d_{y_n})^{\alpha_n},
\end{equation*}
locally near any point of $\d M$, with $a_\alpha \in C^\infty(M)$ and $(\d_{y_i})_{i=2}^n$ a local basis of $\Gamma(T\d M)$. These operators form an algebra, which we call $\Diff_0(M_0)$. This algebra and related pseudodifferential calculi were studied in \cite{Maz1991,Sch1991,Lau2003,DLR2015} for instance.

It was shown in \cite{CNQ2017, Com2018} that there is a groupoid $\G_0 \dr M$ such that $\Diff_{\G_0}(M_0) = \Diff_0(M_0)$. Moreover, this groupoid can be obtained by gluing reductions of the action groupoid
\begin{equation*}
  X_n \rtimes G_n \eqdef (\R_+\times\R^{n-1}) \rtimes (\R_+^* \ltimes \R^{n-1}).
\end{equation*}
Here $\R_+^*$ acts on $\R^{n-1}$ by dilation, and the action of $G_n \eqdef \R_+^*\ltimes\R^{n-1}$ on itself by right multiplication extends smoothly to the boundary by the formula
\begin{equation*}
  (x_1,\ldots,x_n) \cdot (t, \xi_2,\ldots,\xi_n) = (t x_1, x_2 + x_1 \xi_2,\ldots, x_n + x_1 \xi_n), 
\end{equation*}
for all $(x_1,\ldots,x_n) \in \R^n$ and $(t,\xi_2,\ldots,\xi_n) \in G_n$. This entails that $\G_0$ is a (Hausdorff) local action groupoid, which is locally isomorphic to $X_n \rtimes G_n$ around each point of $M$. Because $G_n$ is amenable, Theorem \ref{thm:local_action_groupoids_are_Fredholm} again implies that $\G_0$ is a Fredholm groupoid.

\printbibliography
\end{document}